\newtheorem{theorem}{Theorem}
\theoremstyle{plain}
\newtheorem{lemma}{Lemma}
\numberwithin{equation}{section}
\numberwithin{lemma}{section}
\numberwithin{theorem}{section}
\numberwithin{corollary}{section}
\numberwithin{proposition}{section}
\numberwithin{remark}{section}
\begin{document}
	\title{Units in $F(C_n \times Q_{12})$ and $F(C_n \times D_{12})$}
	\author{Meena Sahai* and Sheere Farhat Ansari** \\ Department of Mathematics and Astronomy\\ Lucknow University \\Lucknow 226007, India.\\
E-mail: *meena\_sahai@hotmail.com \ 
**sheere\_farhat@rediffmail.com}
\date{}
	\maketitle

\begin{abstract}
Let $C_n$, $Q_n$ and $D_n$ be the cyclic group, the quaternion group  and the dihedral group of order $n$, respectively. The structures of the unit groups of the finite group algebras  $FQ_{12}$  and  $F(C_2 \times Q_{12})$ over a finite field $F$ have been studied in J. Gildea, F.  Monaghan (2011),  F. Monaghan (2012), G. Tang, Y. Gao (2011) and  G. Tang, Y. Wei, Y. Li (2014) whereas the structures of the unit groups of the finite group algebras  $FD_{12}$ and $F(C_2 \times D_{12})$ have been studied in J.  Gildea,  F.  Monaghan (2011),  N. Makhijani, R. K. Sharma, J. B. Srivastava (2016), F. Monaghan (2012), M.  Sahai, S. F. Ansari and G. Tang, Y. Gao (2011).  In this paper, we continue this study and establish the structures of the unit groups of the group algebras $F(C_n \times Q_{12})$ and  $F(C_n \times D_{12})$, over a finite field $F$ of characteristic $p$ containing $p^k$ elements. 

\medskip
\noindent\textbf{Keywords}: Group Algebra, Unit Group, Dihedral Group, Cyclic group, Quaternion group.

\medskip
\noindent\textbf{Mathematics Subject Classification (2020)}: 16S34; 20C05.
\end{abstract}

\section{\textbf{Introduction}}
	
Let $FG$ be the group algebra of a finite group $G$ over a finite field $F$ of characteristic $p$ having $q=p^k$ elements. Let $U(FG)$ be the  unit group of $FG$ and let $J(FG)$ be the Jacobson radical of $FG$. If $V=1+J(FG)$, then $U(FG) \cong V \rtimes U(FG/J(FG))$.
	
 If $K$ is a normal subgroup of $G$  then the natural group epimorphism $G \to G/K$ can be extended to an $F$-algebra epimorphism $FG \to F(G/K)$. The kernel of this epimorphism $\omega(K)$,  is the ideal of $FG$ generated by $\{k-1 \mid k \in K\}$. In particular, if $K=G$, then the epimorphism $\epsilon : FG \to F$ given by $\epsilon(\sum_{g \in G} a_gg)=\sum_{g\in G}a_g$ is called the augmentation mapping of $FG$ and the ideal $\omega(G)$ is called the augmentation ideal of $FG$. Clearly, $FG/\omega(G) \cong F^{*}$. 
	
Let $C_n$, $Q_{n}$ % =\langle x, y \mid x^{n}=y^4=1, xy=yx^{-1}\rangle$
 and $D_{n}$ %=\langle x, y \mid x^{2n}=y^2= xyxy = 1 \rangle$ 
be the  cyclic group, the quaternion group and the dihedral group of order $n$, respectively.  Also let  $C_n^k$ be the direct product of $k$ copies of $C_n$. The structure of $U(\mathbb{Z}_2D_{2p})$  for an odd prime $p$ is described in \cite{K2}. This was  extended to a field containing $2^k$ elements in \cite{N1}. In \cite{N2},  the structure of the centre of the maximal $p$-subgroup of $U(FD_{2p^n})$  for $n\geq 2$ is discussed.  Further,  by using an established isomorphism between $FG$ and a certain ring of $n \times n$ matrices in conjunction with other techniques, Gildea~\cite{J3} has obtained  the order of $U(FD_{2p^n})$ for an odd prime $p$ as $p^{{2k}(p^n-1)}(p^k-1)^2$  whereas in~\cite{J14}, he has proved  that  the centre of the maximal $p$-subgroup of $U(FD_{2p})$ is  $C_p^{k(p+1)/2}$. The structures of the unit groups of  $FQ_{12}$  and  $F(C_2 \times Q_{12})$
  have been studied in \cite{J2,  FM, T1, T2}. Also, the  unit groups of  $FD_{12}$  and $F(C_2 \times D_{12})$ have been studied in \cite{J2, N5, FM, sh12, T1}. In this paper, we continue this investigation  and establish the structures of  $U(F(C_n \times G))$ for $G=Q_{12}$ and $D_{12}$. 
	
	Throughout the paper,  
	%$Q_{n}$ is the quaternion group of order $n$, $D_n$ is the dihedral group of order $n$,
	 $F_n$ is the extension field of $F$ of degree $n$ and  $GL(n, F)$ is the general linear group of degree $n$ over $F$. For coprime integers $l$ and $m$, $ord_m(l)$ denotes the  multiplicative order of $l$ modulo $m$.

%	The next result is given in \cite{N4}. We state it here, as it is needed for our work.
%For our work we need the following lemmas.
It is well known that, if $G$ and $H$ are groups, then $F(G \times H) \cong (FG)H$, the group ring of $H$ over the ring $FG$, see \cite[Chap 3, Page 134]{Miles}. This result will be used frequently. Now we state here some of the Lemmas  needed for our work.
	\begin{lemma}{\upshape\cite[Theorem 2.1]{N4}}\label{40}
		Let $F$ be a field of characteristic $p$ having $q=p^k$ elements. If $(n, p)=1$, where $n \in \mathbb{N}$, then
		$$FC_n \cong F \oplus \Big( \oplus_{{l>1}, ~{l|n}}F_{d_l}^{e_l}\Big),$$
		where $d_l=ord_l(q)$ and $e_l=\frac{\phi(l)}{d_l}$.
	\end{lemma}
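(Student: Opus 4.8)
The plan is to pass from the group algebra to a quotient of a polynomial ring and then split it by the Chinese Remainder Theorem. Writing $C_n = \langle g \rangle$, the assignment $g \mapsto x$ induces an $F$-algebra isomorphism $FC_n \cong F[x]/(x^n-1)$. Since $(n,p) = 1$, the polynomial $x^n - 1$ is separable over $F$: its derivative is $nx^{n-1}$, a nonzero scalar multiple of $x^{n-1}$, which is coprime to $x^n-1$, so $x^n-1$ has no repeated irreducible factor. (Equivalently, $FC_n$ is semisimple by Maschke's theorem, hence a direct sum of fields, each a finite extension of $F$.)

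Next I would invoke the factorization into cyclotomic polynomials, $x^n - 1 = \prod_{l \mid n} \Phi_l(x)$, which holds over any field of characteristic not dividing $n$ (reduce the identity over $\mathbb{Z}$ modulo $p$; the right-hand side stays squarefree by the previous step). The distinct monic irreducible factors of $x^n-1$ over $F$ are therefore precisely those of the polynomials $\Phi_l(x)$, $l \mid n$, and these are pairwise coprime, so the Chinese Remainder Theorem gives
$$FC_n \cong F[x]/(x^n-1) \cong \bigoplus_{l \mid n} F[x]/(\Phi_l(x)).$$

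The crux is to identify the ring $F[x]/(\Phi_l(x))$. A root $\zeta$ of $\Phi_l(x)$ in $\overline{F}$ is a primitive $l$-th root of unity, and the smallest subfield of $\overline{F}$ containing it is $F_{d_l}$, where $d_l$ is the least positive integer with $l \mid q^{d_l} - 1$, that is $d_l = ord_l(q)$. The Frobenius map $\zeta \mapsto \zeta^q$ permutes the $\phi(l)$ primitive $l$-th roots of unity, and the orbit of any such $\zeta$ is $\{\zeta, \zeta^q, \dots, \zeta^{q^{d_l - 1}}\}$, of length exactly $d_l$, since $\zeta^{q^j} = \zeta$ iff $l \mid q^j - 1$ iff $d_l \mid j$. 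Hence $\Phi_l(x)$ factors over $F$ into $e_l = \phi(l)/d_l$ distinct monic irreducibles, each of degree $d_l$, whence $F[x]/(\Phi_l(x)) \cong F_{d_l}^{e_l}$. Substituting into the displayed decomposition and separating the term $l = 1$ (where $\Phi_1(x) = x-1$, $d_1 = e_1 = 1$, contributing a single copy of $F$) yields the asserted isomorphism. I expect the only point requiring genuine care to be the orbit-length computation in this last paragraph, namely that every irreducible factor of $\Phi_l(x)$ over $F$ has the full degree $d_l$ rather than a proper divisor of it; this rests on the Galois group of $F_{d_l}/F$ being cyclic and generated by Frobenius. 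The semisimplicity, the cyclotomic factorization, and the CRT step are all routine.
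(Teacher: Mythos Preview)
Your argument is correct and is the standard proof of this classical fact. Note, however, that the paper does not supply its own proof of this lemma: it is quoted as \cite[Theorem 2.1]{N4} and used as a black box throughout, so there is no in-paper argument to compare against. Your write-up would serve perfectly well as a self-contained justification should one be desired.
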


%\begin{lemma}{\upshape\cite[Lemma 2.1]{N2}}\label{23}
%	Let $F$ be a perfect field and let $G$ be a finite group. Then
%	$$U(FG) \cong (1+J(FG)) \rtimes U(FG/J(FG)).$$
%\end{lemma}

\begin{lemma}{\upshape\cite[Lemma 3.3]{T1}}\label{9*}
	Let $F$ be a finite field of characteristic $p$ with $|F|=q=p^k$. If $p \neq 2$, then
	\begin{center}
		$FC_4 \cong
		\begin{cases}
		F^4, & \text{if $p \equiv 1$ mod $4$ or $n$ is even;}\\
		 F^2 \oplus F_2, & \text{if $p \equiv -1$ mod $4$ and $n$ is odd.}
		\end{cases}$
	\end{center}
\end{lemma}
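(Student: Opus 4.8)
The plan is to realize $FC_4$ as a quotient of a polynomial ring and split it into Wedderburn components. Since $p\neq 2$ we have $\gcd(4,p)=1$, so $x^4-1$ is separable over $F$; writing $x^4-1=(x-1)(x+1)(x^2+1)$ and using the Chinese Remainder Theorem gives $FC_4\cong F[x]/(x^4-1)\cong F\oplus F\oplus F[x]/(x^2+1)$. Equivalently one can quote Lemma~\ref{40} with $n=4$: the divisors $l>1$ of $4$ are $l=2$, contributing one copy of $F$ since $ord_2(q)=1$, and $l=4$, contributing $F_{d_4}^{e_4}$ with $d_4=ord_4(q)$ and $e_4=\phi(4)/d_4=2/d_4$. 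In either formulation everything comes down to deciding whether $x^2+1$ splits over $F$, that is, whether $ord_4(q)$ equals $1$ or $2$.

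Next I would settle this splitting question by an elementary count. The polynomial $x^2+1$ has a root in $F$ exactly when $-1$ is a square in the cyclic group $F^{\times}$ of order $q-1$; since the characteristic is not $2$, the element $-1$ has order $2$, and it is a square precisely when $4\mid q-1$, i.e. $q\equiv 1\pmod 4$. I then translate this onto $q=p^k$ (the exponent $k$ being the quantity written $n$ in the statement): if $p\equiv 1\pmod 4$ then $q\equiv 1\pmod 4$ for every $k$, while if $p\equiv -1\pmod 4$ then $q=p^k\equiv(-1)^k\pmod 4$, so $q\equiv 1\pmod 4$ if and only if $k$ is even. Hence $q\equiv 1\pmod 4$ holds in exactly the situations listed in the first branch of the claim.

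Finally I would assemble the two cases. If $q\equiv 1\pmod 4$, pick $a\in F$ with $a^2=-1$; then $x^2+1=(x-a)(x+a)$ with $a\neq -a$ since the characteristic is not $2$, so $F[x]/(x^2+1)\cong F\oplus F$ and $FC_4\cong F^4$. If $q\equiv -1\pmod 4$, then $x^2+1$ has no root in $F$ and, being quadratic, is irreducible, so $F[x]/(x^2+1)$ is the degree-$2$ field extension $F_2$ and $FC_4\cong F^2\oplus F_2$. No serious obstacle arises here: the only points needing a little care are the modular bookkeeping that rewrites $q\equiv 1\pmod 4$ in terms of $p\bmod 4$ and the parity of $k$, and invoking separability so as to be sure the rank-two split factor is $F\oplus F$ and not $F[x]/(x-a)^2$.
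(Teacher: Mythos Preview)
Your argument is correct. The paper does not supply its own proof of this lemma; it simply quotes the result from \cite[Lemma~3.3]{T1}, so there is nothing to compare your approach against. Your identification of the undeclared ``$n$'' in the statement with the exponent $k$ in $q=p^k$ is the right reading (this is evidently a typo carried over from the source, where the exponent was presumably denoted $n$). The route via $F[x]/(x^4-1)$ and the CRT is the standard one, and your alternative derivation from Lemma~\ref{40} with $n=4$ is equally valid and in fact closer in spirit to how the paper organizes such computations elsewhere.
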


\begin{lemma}\cite[Lemma 2.3]{N4}\label{11} 
	Let $F$ be a finite field of characteristic $p$ with $|F|=q=p^k$. Then
	\begin{center}
		$U(FC_{p^n}) \cong 
		\begin{cases}
		C_p^{(p-1)k} \times C_{p^k-1}, & \text{if $n=1$;}\\
	\prod_{s=1}^{n}C_{p^s}^{h_s} \times C_{p^k-1}, & \text{otherwise,}
		\end{cases}$
	\end{center}
	where $h_n=k(p-1)$ and $h_s=kp^{n-s-1}(p-1)^2$, for all s, $1 \leq  s < n$.
\end{lemma}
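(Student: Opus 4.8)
The plan is to reduce everything to a computation inside the commutative local ring $R = FC_{p^n}$. First I would use $\mathrm{char}\,F = p$ to write $x^{p^n} - 1 = (x-1)^{p^n}$ in $F[x]$, so that $FC_{p^n} \cong F[x]/\langle (x-1)^{p^n}\rangle \cong F[y]/\langle y^{p^n}\rangle$ under $y = x-1$. This $R$ is local, its maximal ideal and Jacobson radical both equal $J = \langle y\rangle$, and $R/J \cong F$ with $J^{p^n} = 0$. Writing $V = 1 + J$ for the group of principal units, the epimorphism $U(R) \to U(R/J) \cong F^{*}$ is split by the embedding $F^{*} \hookrightarrow U(R)$, and since $R$ is commutative this makes $U(R) \cong V \times F^{*} \cong V \times C_{p^k - 1}$ (equivalently, $V$ and $C_{p^k-1}$ are the $p$-part and the $p'$-part of the finite abelian group $U(R)$). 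So the whole problem is to determine the abelian $p$-group $V$.

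The engine of the computation is the identity $(1+z)^{p^j} = 1 + z^{p^j}$, valid for every $z$ and every $j \ge 0$ in a commutative ring of characteristic $p$. It shows that the subgroup of $p^j$-th powers in $V$ is $V^{p^j} = 1 + J^{(p^j)}$, where $J^{(p^j)} = \{\, z^{p^j} : z \in J \,\}$. Using that $z \mapsto z^{p^j}$ is additive in characteristic $p$ and that the Frobenius is surjective on the finite field $F$, I would check that $J^{(p^j)}$ is in fact the $F$-subspace of $R$ spanned by $\{\, y^{i p^j} : i \ge 1,\ i p^j \le p^n - 1\,\}$; hence $\dim_F J^{(p^j)} = p^{n-j} - 1$ for $0 \le j \le n-1$ and $J^{(p^j)} = 0$ for $j \ge n$, so that $|V^{p^j}| = q^{p^{n-j}-1}$ for $0 \le j \le n-1$ and $|V^{p^n}| = 1$.

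From these orders the type of $V$ follows by the standard argument with finite abelian $p$-groups. Putting $c_j = \dim_{\mathbb{F}_p}\big(V^{p^j}/V^{p^{j+1}}\big) = \log_p |V^{p^j}| - \log_p |V^{p^{j+1}}|$, a direct calculation gives $c_j = k\,p^{n-j-1}(p-1)$ for $0 \le j \le n-1$ and $c_j = 0$ for $j \ge n$. Since the number of cyclic direct summands of order $p^{s}$ in $V$ equals $c_{s-1} - c_s$, one gets exactly $h_s = k\,p^{n-s-1}(p-1)^2$ such summands for $1 \le s \le n-1$ and $h_n = c_{n-1} = k(p-1)$. When $n = 1$ this collapses to $V^p = 1$ with $|V| = q^{p-1}$, i.e.\ $V \cong C_p^{k(p-1)}$, recovering the first case. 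Combined with $U(R) \cong V \times C_{p^k-1}$ from the first paragraph, this is the asserted decomposition.

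The one place that needs genuine care is identifying $J^{(p^j)}$: a priori it is only the image of a Frobenius-semilinear (non-$F$-linear) map, and one must argue that it is nonetheless an honest $F$-subspace with the monomial basis above — this is exactly where additivity of the $p^j$-power map and surjectivity of the Frobenius on $F$ enter — and, relatedly, that the set $\{\,v^{p^j} : v \in V\,\}$ really is a subgroup, which holds because $V$ is finite abelian. After that the argument is just bookkeeping with geometric series.
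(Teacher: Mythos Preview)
The paper does not give its own proof of this lemma: it is quoted verbatim as \cite[Lemma~2.3]{N4} and used as a black box. Your argument is a correct, self-contained proof. The identification $FC_{p^n}\cong F[y]/\langle y^{p^n}\rangle$, the splitting $U(R)\cong V\times C_{p^k-1}$, the computation $V^{p^j}=1+J^{(p^j)}$ via the Frobenius identity, and the count $\dim_F J^{(p^j)}=p^{n-j}-1$ are all valid; the bookkeeping $h_s=c_{s-1}-c_s$ then matches the stated exponents. Your flagged subtlety is indeed the only nontrivial step: that $J^{(p^j)}$ coincides with the $F$-span of $\{y^{ip^j}:1\le i\le p^{n-j}-1\}$ uses both additivity of the $p^j$-th power map in characteristic $p$ and surjectivity of Frobenius on the finite field $F$, exactly as you say. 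So there is nothing to compare against in the paper itself, but your proof stands on its own.
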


\begin{lemma}\cite[Lemma 3.2]{sh4}\label{5} Let $F$ be a finite field of characteristic $p$ with $|F|=q=p^k$. If $p \neq 2$, then
	$$U(FC_2^n) \cong C_{q-1}^{{2^n}}.$$
\end{lemma}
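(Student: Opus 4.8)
The plan is to establish the $F$-algebra isomorphism $FC_2^n \cong F^{2^n}$ and then read off the unit group. Since $p \neq 2$, we have $\gcd(2^n, p) = 1$, so $FC_2^n$ is semisimple by Maschke's theorem; being commutative, it is a direct sum of finite extension fields of $F$. Because every element of $C_2^n$ squares to the identity, in each simple component every group element maps to a square root of $1$ in $F$, i.e. to $\pm 1$; hence every component equals $F$, and comparing $F$-dimensions forces $FC_2^n \cong F^{2^n}$.

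Alternatively --- and this is the route I would actually write out --- I would induct on $n$ via the identification $F(G \times H) \cong (FG)H$ recalled before Lemma~\ref{40}. For $n = 1$, $FC_2 \cong F[x]/(x^2-1) \cong F[x]/(x-1) \oplus F[x]/(x+1) \cong F \oplus F$, the middle isomorphism being the Chinese Remainder Theorem, valid precisely because $x-1$ and $x+1$ are coprime when $p \neq 2$; this also agrees with Lemma~\ref{40}, since $d_2 = ord_2(q) = 1$ and $e_2 = \phi(2)/d_2 = 1$ give $FC_2 \cong F \oplus F$. For the inductive step, $FC_2^{n+1} \cong (FC_2^n)C_2 \cong (F^{2^n})C_2 \cong (FC_2)^{2^n} \cong (F^2)^{2^n} = F^{2^{n+1}}$, using that the group-ring construction commutes with finite direct products of the coefficient ring.

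Taking unit groups, $U(FC_2^n) \cong U(F^{2^n}) \cong U(F)^{2^n} = (F^*)^{2^n}$. Since the multiplicative group of the finite field $F$ is cyclic of order $q-1$, this yields $U(FC_2^n) \cong C_{q-1}^{2^n}$, as claimed.

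There is no serious obstacle here; the only delicate point is the systematic use of $p \neq 2$, which is exactly what guarantees both that $FC_2^n$ is semisimple and that $x^2 - 1$ splits into distinct linear factors over $F$ (equivalently, $1 \neq -1$ in $F$). For $p = 2$ the statement fails, since then $FC_2^n$ is local with nonzero Jacobson radical.
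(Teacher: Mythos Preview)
Your proof is correct; both the semisimplicity argument and the inductive argument via $F(G\times H)\cong (FG)H$ are valid, and either one suffices. Note, however, that the paper does not actually prove this lemma: it is quoted as \cite[Lemma 3.2]{sh4} and used as a black box. So there is no ``paper's own proof'' to compare against. Your write-up is a perfectly good self-contained justification, and in fact your inductive route dovetails with the machinery the paper already invokes (Lemma~\ref{40} and the isomorphism $F(G\times H)\cong (FG)H$), so it would fit naturally if one wanted to include a proof.
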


%\begin{lemma}\cite[Theorem 2.8]{N5}\label{61}
%	Let $F$ be a field having $q =2^{k}$ elements and $n$  is odd. Then 
%	$$U(FQ_{4n}) \cong \Big(C^{(2n-2)k}_2  o ( C_2^k \times C_4^k)\Big) \rtimes \Big(C_{q-1} \times \prod_ {{m|n},~{m>1}} GL(2,F_{e_m})^{\frac{\phi(m)}{2e_m}}\Big),$$ where $o$ represent the general product and
%	\begin{enumerate}
%		\item 
%		$e_m=
%		\begin{cases}
%		d_m/2, & \text{if $d_m$ is even and $q^{d_m/2} \cong -1$ mod $m$;}\\
%		d_m, & \text{otherwise.}\\
%		\end{cases}$
%		\item  $d_m = ord_m(q)$.
%	\end{enumerate}
%\end{lemma}

%\begin{lemma}\cite[Theorem 2.6]{N5}\label{60}
%	Let $F$ be a field having $q =2^{k}$ elements and $n$  is odd. Then 
%	$$U(FD_{4n}) \cong C^{(2n+1)k}_2  \rtimes  \Big(C_{q-1} \times \prod_ {{m|n},~{m>1}} GL(2,F_{e_m})^{\frac{\phi(m)}{2e_m}}\Big),$$ where 
%	\begin{enumerate}
%		\item 
%		$e_m=
%		\begin{cases}
%		d_m/2, & \text{if $d_m$ is even and $q^{d_m/2} \cong -1$ mod $m$;}\\
%		d_m, & \text{otherwise.}\\
%		\end{cases}$
%		\item  $d_m = ord_m(q)$.
%	\end{enumerate}
%\end{lemma}

\section{Units in $F(C_n \times Q_{12})$}
 We shall use the following presentation for $C_n \times Q_{12}$:
	$$C_n \times Q_{12} =\langle x, y, z \mid x^{3}=y^4=z^n=1, xy=yx^2, xz=zx, yz=zy\rangle.$$
	\begin{theorem} Let $F$ be a finite field of characteristic $2$ containing $q=2^k$ elements and let $G=C_n \times Q_{12}$. If  $n$ is odd, then
		$$U(FG) \cong (C_2^{5nk} \times C_4^{nk})
		\rtimes \Bigg(\bigg(C_{q-1} \times GL(2, F)\bigg) \times \bigg(\prod_{l>1, ~l|n}\big(C_{q^{d_l}-1} \times GL(2, F_{d_l})\big)^{e_l}\bigg)\Bigg).$$
	\end{theorem}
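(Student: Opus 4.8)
The plan is to first strip off the cyclic direct factor and reduce to a computation over field extensions of $F$. Since $n$ is odd, $(n,2)=1$, so Lemma~\ref{40} gives $FC_n\cong F\oplus\bigoplus_{l>1,\,l\mid n}F_{d_l}^{\,e_l}$ with $d_l=ord_l(q)$ and $e_l=\phi(l)/d_l$. Using $F(C_n\times Q_{12})\cong (FC_n)Q_{12}$ and the fact that a ring direct sum passes through the group-ring construction,
$$FG\;\cong\;FQ_{12}\ \oplus\ \bigoplus_{l>1,\,l\mid n}\bigl(F_{d_l}Q_{12}\bigr)^{e_l},\qquad U(FG)\cong U(FQ_{12})\times\prod_{l>1,\,l\mid n}U(F_{d_l}Q_{12})^{e_l}.$$
So it suffices to determine $U(KQ_{12})$ for an arbitrary finite field $K$ of characteristic $2$ with $|K|=2^m$; the theorem then follows by collecting the factors, using $1+\sum_{l>1,\,l\mid n}\phi(l)=\sum_{l\mid n}\phi(l)=n$ to combine the abelian $2$-parts into $C_2^{5nk}\times C_4^{nk}$ and the semisimple complements into the displayed product.

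Next I would analyse $KQ_{12}$ using the central idempotent $e=1+x+x^2$, giving $KQ_{12}\cong KQ_{12}e\times KQ_{12}(1-e)$. On the first factor $x$ acts trivially, so $KQ_{12}e\cong K[Q_{12}/\langle x\rangle]\cong KC_4$; Lemma~\ref{11} (with $p=2$, $n=2$, and $2^m$ playing the role of $q$) then yields $U(KC_4)\cong C_2^m\times C_4^m\times C_{2^m-1}$, i.e. $1+J(KC_4)\cong C_2^m\times C_4^m$ and $KC_4/J(KC_4)\cong K$. For the $8$-dimensional second factor $B:=KQ_{12}(1-e)$, I use that $y^2$ is central in $Q_{12}$ with $(y^2-1)^2=0$, so $(y^2-1)B$ is a square-zero ideal, and $B/(y^2-1)B$ is the image of $B$ under the natural map $KQ_{12}\to K[Q_{12}/\langle y^2\rangle]=KS_3$, namely the summand $KS_3(1-\bar e)$ with $\bar e=1+\bar x+\bar x^2$, which has $K$-dimension $4$.

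The key point is then that $KS_3(1-\bar e)\cong M_2(K)$. Using $S_3\cong GL(2,\mathbb F_2)$, the natural $2$-dimensional representation extends to a surjection $KS_3\twoheadrightarrow M_2(K)$ under which $\bar e\mapsto 0$ (the image of $\bar x$ is an order-$3$ matrix, which satisfies $t^2+t+1=0$), hence a surjection $KS_3(1-\bar e)\twoheadrightarrow M_2(K)$ of $K$-algebras of equal dimension. Since $M_2(K)$ is semisimple this forces $J(B)=(y^2-1)B$, a square-zero ideal of $K$-dimension $4$; therefore $1+J(B)\cong(K^4,+)\cong C_2^{4m}$ and $U(B)\cong C_2^{4m}\rtimes GL(2,K)$. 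Combining the two factors, $1+J(KQ_{12})\cong C_2^{5m}\times C_4^m$ and $KQ_{12}/J(KQ_{12})\cong K\times M_2(K)$, so
$$U(KQ_{12})\;\cong\;\bigl(C_2^{5m}\times C_4^m\bigr)\rtimes\bigl(C_{2^m-1}\times GL(2,K)\bigr).$$
Specializing to $K=F$ ($m=k$) and $K=F_{d_l}$ ($m=kd_l$, so $2^m-1=q^{d_l}-1$) and substituting into the product decomposition of $U(FG)$ gives the theorem.

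The main obstacle is the structural analysis of $B=KQ_{12}(1-e)$, specifically verifying that its semisimple quotient is a \emph{single} copy of $M_2(K)$. This is the one place where characteristic $2$ is not generic: over $\mathbb C$ the group $Q_{12}$ has two inequivalent $2$-dimensional irreducibles, and one must see that they collapse to the same module modulo $2$; passing to $KS_3$ via the central element $y^2$ is what makes this transparent. Everything else is routine bookkeeping with the idempotent $e$ and the cited lemmas.
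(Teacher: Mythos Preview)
Your reduction is identical to the paper's: decompose $FC_n$ via Lemma~\ref{40}, pass to $\bigoplus (F_{d_l}Q_{12})^{e_l}$, and then collect the $2$-parts using $\sum_{l\mid n}\phi(l)=n$. The only difference is that the paper simply cites \cite[Theorem 3.2]{T2} for $U(KQ_{12})\cong (C_2^{5m}\times C_4^{m})\rtimes(C_{2^m-1}\times GL(2,K))$, whereas you derive it from scratch using the central idempotent $e=1+x+x^2$ (splitting off $KC_4$) and the square-zero central element $y^2-1$ (reducing the complementary summand to $KS_3(1-\bar e)\cong M_2(K)$). Your analysis is correct---in particular the identification of $J(B)=(y^2-1)B$ follows because the quotient is simple and the ideal is nilpotent---so your argument is a self-contained version of the paper's proof rather than a genuinely different route.
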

	\begin{proof}
		Since $n$ is odd, $FC_n$ is semisimple. Thus by Lemma~\ref{40},
		\begin{align*}
		FG &\cong (FC_n)Q_{12},\\
		&\cong \Big(F \oplus  \big(\oplus_{l>1,~l|n}F_{d_l}^{e_l} \big)\Big)Q_{12},\\
		&\cong FQ_{12} \oplus\Big( \oplus_{l>1,~l|n}(F_{d_l}Q_{12})^{e_l}\Big).
		\end{align*}
		Now by \cite[Theorem 3.2]{T2},
	%	Now by \cite[Theorem 2.8]{N5}
%Now	By Lemma~\ref{61},
		$U(FQ_{12}) \cong (C_2^{5k} \times C_4^{k}) \rtimes \big(C_{q-1} \times GL(2, F)\big)$
		and  so
		$$U(F_{d_l}Q_{12})^{e_l}\cong (C_2^{5\phi(l)k} \times C_4^{\phi(l)k}) \rtimes \big(C_{q^{d_l}-1} \times GL(2, F_{d_l})\big)^{e_l}.$$
		As $\sum_{{l|n}}\phi(l)=n$, so
			$$U(FG) \cong (C_2^{5nk} \times C_4^{nk})
		\rtimes \Bigg(\bigg(C_{q-1} \times GL(2, F)\bigg) \times \bigg(\prod_{l>1, ~l|n}\big(C_{q^{d_l}-1} \times GL(2, F_{d_l})\big)^{e_l}\bigg)\Bigg).$$
	\end{proof}	
	
	\begin{theorem} Let $F$ be a finite field of characteristic $3$ containing $q=3^k$ elements and let $G=C_n \times Q_{12}$. Then
		$$U(FG) \cong (C_3^{6nk} \rtimes C_3^{2nk}) \rtimes U\big(F(C_n \times C_4)\big).$$
		
	\end{theorem}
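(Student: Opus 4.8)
The plan is to reduce the characteristic-$3$ case to the semisimple case $F(C_n\times C_4)$ by peeling off the (nontrivial) Jacobson radical coming from the Sylow $3$-subgroup of $Q_{12}$. First I would recall that $Q_{12}\cong C_3\rtimes C_4$, so that $FG=F(C_n\times Q_{12})\cong F(C_n\times C_4)[C_3]$ as a twisted/ordinary group ring; more precisely, writing $H=C_n\times C_4=\langle x', y, z\rangle$ acting on $\langle x\rangle\cong C_3$ (with $y$ inverting $x$ and $x',z$ centralizing it), we have $FG\cong (FH)\langle x\mid x^3=1\rangle$ with the action of $H$ extended $F$-linearly. Since $\mathrm{char}\,F=3$, the augmentation ideal $\omega(C_3)=(x-1)FG$ is nilpotent of nilpotency index $3$, and it coincides with $J(FG)$ because $FG/\omega(C_3)\cong FH$ is semisimple ($|H|=4n$ is coprime to $3$). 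Hence $U(FG)\cong V\rtimes U(FH)$ where $V=1+J(FG)$ and $U(FH)=U\big(F(C_n\times C_4)\big)$, which gives the outer semidirect factor in the statement.

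Next I would analyze $V=1+J$ as a $3$-group, using the filtration $J\supseteq J^2\supseteq J^3=0$. As an $F$-space, $J=\omega(C_3)=(x-1)FH\oplus(x-1)^2FH$ (note $(x-1),(x-1)^2$ are an $FH$-basis of $\omega(C_3)$ over $FH$, so $\dim_F J=2\cdot 4n k=8nk$, with $\dim_F J^2=4nk$). The set $1+J^2$ is a central-by-abelian elementary abelian subgroup: for $u,v\in J^2$ we have $uv\in J^4=0$, so $(1+u)(1+v)=1+u+v$, giving $1+J^2\cong C_3^{\,4nk}$. Then $V/(1+J^2)\cong (J/J^2,+)\cong C_3^{\,4nk}$ as well, since for $a,b\in J$, $(1+a)(1+b)=1+a+b+ab$ with $ab\in J^2$. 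So $V$ is a $3$-group of order $3^{8nk}$ fitting in $1\to C_3^{\,4nk}\to V\to C_3^{\,4nk}\to 1$. To match the asserted shape $C_3^{6nk}\rtimes C_3^{2nk}$, I would compute $V^3$ and $[V,V]$: for $1+a\in V$ one gets $(1+a)^3=1+a^3$ (all cross terms land in $J^3=0$ since $\mathrm{char}=3$ and the binomial middle coefficient is $3$), and $a^3=0$ as $a\in J$, $J^3=0$; hence $V$ has exponent $3$, so $V\cong C_3^{\,8nk}$ as an abstract group. Writing this as $C_3^{6nk}\rtimes C_3^{2nk}$ (a trivial split) or, more informatively, identifying a normal subgroup isomorphic to $1+\omega(C_3)\omega(H)$-type pieces of rank $6nk$ with complement of rank $2nk$, recovers the displayed decomposition — the factor $C_3^{2nk}$ presumably being the image of $1+(x-1)F C_n$-diagonal elements, of $F$-dimension $2nk$, and $C_3^{6nk}$ its normal complement.

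The main obstacle I anticipate is pinning down the specific internal decomposition $C_3^{6nk}\rtimes C_3^{2nk}$ rather than merely the isomorphism type $C_3^{8nk}$: since $V$ turns out elementary abelian, any such decomposition is a non-canonical direct product, and the ``$\rtimes$'' in the statement is then genuinely a direct product dressed up — so the real content is choosing natural generators. Concretely I would split $\omega(C_3)=(x-1)FH$ using $FH\cong FC_n\otimes_F FC_4$ and the idempotent decomposition of $FC_4$ (Lemma~\ref{9*}: $FC_4\cong F^4$ or $F^2\oplus F_2$ in characteristic $\ne 2$, so here $q=3^k$ and $FC_4$ splits according to $q\bmod 4$); tensoring with $FC_n$ and with the two-step radical of $FC_3$, one reads off the ranks, $2nk+6nk=8nk$, and assigns the $2nk$-part (the ``diagonal'' or principal-block contribution) as the complement and the rest as the normal subgroup. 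I would also double-check the edge interaction between the $C_4$-action inverting $x$ and the idempotents of $FC_4$ — the component where $y$ acts as $-1$ on $x$ behaves differently from where $y$ acts trivially — but since we only need group-theoretic structure of an elementary abelian $3$-group, these subtleties affect only which basis vectors we name, not the final count. Finally, assembling $U(FG)\cong V\rtimes U(FH)$ with $V\cong C_3^{6nk}\rtimes C_3^{2nk}$ and $U(FH)=U\big(F(C_n\times C_4)\big)$ yields the theorem.
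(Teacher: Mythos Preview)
Your outer reduction $U(FG)\cong V\rtimes U(FH)$ with $V=1+\omega(\langle x\rangle)$ and $H=C_n\times C_4$ matches the paper's first step. The gap is in your analysis of $V$: from $(1+a)^3=1$ you correctly get that $V$ has exponent $3$, but you then assert ``so $V\cong C_3^{\,8nk}$ as an abstract group'' and that ``the $\rtimes$ in the statement is then genuinely a direct product dressed up.'' This is false; $V$ is not abelian. Concretely, $x=1+(x-1)\in V$, and for $b=(x-1)y\in J$ the relation $yx=x^2y$ gives $(x-1)b=(x-1)^2y=\widehat{x}\,y$ while $b(x-1)=(x-1)y(x-1)=(x-1)(x^2-1)y=-\widehat{x}\,y$, so the group commutator $[x,1+b]=1+2\widehat{x}\,y\neq 1$. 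You announced you would compute $[V,V]$ but never did; had you done so, you would have found it nontrivial. (A smaller side issue: you claim $FH$ is semisimple because $|H|=4n$ is coprime to $3$, but the theorem allows $3\mid n$. The split $U(FG)\cong V\rtimes U(FH)$ survives anyway because $\omega(\langle x\rangle)$ is nilpotent, but $\omega(\langle x\rangle)\neq J(FG)$ in that case.)

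The paper obtains the structure of $V$ by computing the centralizer $C_V(x)$ explicitly and showing it is abelian of order $3^{6nk}$; it then writes down a concrete abelian subgroup $T\cong C_3^{4nk}$ (built from elements of the form $1+\sum_j(\widehat{x}(t_{j1}+t_{j2}y^2)+(x+2x^2)(t_{j3}y+t_{j4}y^3))z^j$), verifies by direct conjugation that $T$ normalises $C_V(x)$, computes $C_V(x)\cap T\cong C_3^{2nk}$, and then chooses a complement $W\cong C_3^{2nk}$ inside $T$ with $C_V(x)\cap W=1$, giving $V=C_V(x)\rtimes W\cong C_3^{6nk}\rtimes C_3^{2nk}$ as a genuine non-trivial semidirect product. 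Your filtration $J\supset J^2\supset 0$ gives the right order and exponent but cannot by itself distinguish this from an abelian group; you would still need to locate an abelian normal subgroup of index $3^{2nk}$ and an explicit complement.
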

	\begin{proof}	Let $K = \langle x \rangle$. Then $G/K \cong H =
		\langle  y, z \rangle=C_n \times C_4$. Thus from the ring epimorphism $ FG \rightarrow FH$ given by  
		\begin{align*}
		\sum_{l=0}^{n-1}\sum_{j=0}^{3}\sum_{i=0}^2a_{i+3j+12l}x^iy^jz^l \mapsto  \sum_{l=0}^{n-1}\sum_{j=0}^{3}\sum_{i=0}^2a_{i+3j+12l}y^jz^l,	
		\end{align*}
		we get a	group epimorphism $\theta : U(FG) \to U(FH)$. 
		
		Further, from the inclusion map $ FH \rightarrow FG$, we have  $i : U(FH) \rightarrow U(FG)$ 
		such that $\theta i =1_{U(FH)}$. Therefore $U(FG)$ is a split extension of $U(FH)$ by  $V=ker(\theta) = 1+\omega(K)$. Hence 
		$$U(FG) \cong V \rtimes U(FH).$$
		
		Now, let $u=\sum_{l=0}^{n-1}\sum_{j=0}^{3}\sum_{i=0}^2a_{i+3j+12l}x^iy^jz^l\in U(FG)$, then $u \in V$ if and only if $\sum_{i=0}^2a_i=1$ and $\sum_{i=0}^2a_{i+3j}=0$ for $j=1, 2, \dots, (4n-1)$. Therefore  $$V=\Big\{1+\sum_{l=0}^{n-1}\sum_{j=0}^{3}\sum_{i=1}^2(x^i-1)b_{i+2j+8l}y^jz^l \mid b_i \in F\Big\}$$
		and $|V|=3^{8nk}$. Since $\omega(K)^3 = 0$, $V^3 = 1$. We now study the structure of $V$ in the following steps:\\
		
		{\bf Step 1:} $C_V(x)=\{v\in V \mid vx=xv\}\cong C_3^{6nk}$.\\
		
		If $v=1+ \sum_{l=0}^{n-1}\sum_{j=0}^{3}\sum_{i=1}^2(x^i-1)b_{i+2j+8l}y^jz^l\in C_V(x)=\{v\in V \mid vx=xv\}$, then
		$vx-xv=\widehat{x}\sum_{l=0}^{n-1}\big((b_{3+8l}-b_{4+8l})y+(b_{7+8l}-b_{8+8l})y^3\big)z^i$.
		Thus $v\in C_V(x)$ if and only if $b_{j+8l}=b_{1+j+8l}$  for $j=3, 7$ and $l=0, 1, \dots, n-1$.
		Hence
		\begin{align*}
		C_V(x)=&\Big\{1+\sum_{l=0}^{n-1}\sum_{j=0}^1\sum_{i=1}^2(x^{i}-1)c_{i+2j+4l}y^{2j}z^{l}\\&+\widehat{x}\sum_{l=0}^{n-1}\sum_{j=0}^1c_{4n+nj+l+1}y^{2j+1}z^l \mid c_i \in F\Big\}.
		\end{align*}
		So $C_V(x)$ is an abelian subgroup of $V$ and $|C_V(x)| = 3^{6nk}$. Therefore $C_V(x) \cong C_3^{6nk}$.\\
		
		{\bf Step 2:}
		Let $T$ be the subset of $V$ consisting of elements of the form
		$$1+\sum_{j=0}^{n-1}\Big(\widehat{x}(t_{j1}+t_{j2}y^2)+(x+2x^2)(t_{j3}y+t_{j4}y^3)\Big)z^{j},$$
		where $t_{j_{i}} \in F$. Then $T$ is an abelian subgroup of $V$ and 
		$ T\cong C_3^{4nk}.$
		
		Let 	$$t_1=1+\sum_{j=0}^{n-1}\Big(\widehat{x}(r_{j1}+r_{j2}y^2)+(x+2x^2)(r_{j3}y+r_{j4}y^3)\Big)z^{j} \in T$$	
		and 
		$$t_2=1+\sum_{j=0}^{n-1}\Big(\widehat{x}(s_{j1}+s_{j2}y^2)+(x+2x^2)(s_{j3}y+s_{j4}y^3)\Big) z^{j}\in T.$$
		Then
		\begin{align*}
		t_1t_2=&1+\sum_{j=0}^{n-1}\Big(\widehat{x}\big((r_{j1}+s_{j1}+\gamma_{1})+(r_{j2}+s_{j2}+\gamma_{2})y^2\big)\\&+(x+2x^2)\big((r_{j3}+s_{j3})y+(r_{j4}+s_{j4})y^3\big)\Big)z^{j}\in T,
		\end{align*}
		where 
		\begin{align*}
		\gamma_{1}&=2\sum_{i=0}^{n-1}(r_{j3}s_{i4}+r_{j4}s_{i3})z^i,\\
		\gamma_{2}&=2\sum_{i=0}^{n-1}(r_{j3}s_{i3}+r_{j4}s_{i4})z^i.
		\end{align*}
		So $T$ is an abelian subgroup of $V$ and $|T| = 3^{4nk}$. Therefore $T\cong C_3^{4nk}$.
		
		Now, let	
		\begin{align*}
		c&=1+\sum_{l=0}^{n-1}\sum_{j=0}^1\sum_{i=1}^2(x^{i}-1)c_{i+2j+4l}y^{2j}z^{l}\\&+\widehat{x}\sum_{l=0}^{n-1}\sum_{j=0}^1c_{4n+nj+l+1}y^{2j+1}z^l
		\in C_V(x)
		\end{align*}	
		and 
		$$t= 1+\sum_{j=0}^{n-1}\Big(\widehat{x}(t_{j1}+t_{j2}y^2)+(x+2x^2)(t_{j3}y+t_{j4}y^3)\Big)z^{j}\in T.$$ Then
		\begin{align*}
		t^{-1}&= 1+2\sum_{j=0}^{n-1}\Big(\widehat{x}(t_{j1}+t_{j2}y^2)+(x+2x^2)(t_{j3}y+t_{j4}y^3)\Big)z^{j}\\&~~~~+2\sum_{j=0}^{n-1}\widehat{x}\Big((t_{j3}^2+t_{j4}^2)y^2+2
		t_{j3}t_{j4}\Big)z^{2j}
		\end{align*}
		and
		\begin{align*}
		c^t&= c+\widehat{x}\sum_{i=0}^{n-1}\sum_{j=0}^{n-1}\Big((c_{1+4i}-c_{2+4i})t_{j3}+(c_{3+4i}-c_{4+4i})t_{j4}\Big)yz^{i+j}\\&~~~~+\widehat{x}\sum_{i=0}^{n-1}\sum_{j=0}^{n-1}\Big((c_{1+4i}-c_{2+4i})t_{j4}+(c_{3+4i}-c_{4+4i})t_{j3}\Big)yz^{i+j}.
		\end{align*}
		Clearly, $c^t \in C_V(x)$. Thus $T$ normalizes $C_V(x)$. Now, if $U=C_V(x) \cap T$, then
		$$U=\Big\{1+\widehat{x}\sum_{j=0}^{n-1}(t_{j1}+t_{j2}y^2)z^{j} \mid t_{ji} \in F\Big\}\cong C_3^{2nk}.$$
		So for some subgroup $W \cong C_3^{2nk}$ of $T$, we have $T=U \times W$, $C_V(x) \cap W=1$ and $|C_V(x)W|=|V|=3^{8nk}$. Hence $V \cong C_V(x) \rtimes W \cong C_{3}^{6nk} \rtimes C_{3}^{2nk}$.
	\end{proof}
	For $U\big(F(C_n \times C_4)\big)$, we prove the following:
	\begin{theorem}
		Let $F$ be a finite field of characteristic $3$ containing $q=3^k$ elements and let $H=C_n \times C_4$, where $n=3^rs$ such that $r\geq 0$ and $(3, s)=1$. Then $U(FH)$ is isomorphic to 
		\begin{enumerate}
			\item If $3 \nmid n$, then 
			\begin{enumerate}
				\item $C_{q-1}^4 \times\big( \prod_{{l>1}, ~{l|n}}C_{q^{d_l}-1}^{4e_l}\big)$, if $q \equiv 1$ mod $4$;
				\item $C_{q-1}^2 \times C_{q^{2}-1} \times \big( \prod_{{l>1}, ~{l|n}}(C_{q^{d_l}-1}^{2e_l}\times C_{q^{d'_l}-1}^{e'_l})\big)$,  if   $q \equiv -1$ mod $4$.
			\end{enumerate}
			\item If $3|n$, then
			\begin{enumerate}
				\item $C_{q-1}^4  \times \big(\prod_{{l>1}, ~{l|s}}C_{q^{d_l}-1}^{4e_l}\big) \times \big(\prod_{t=1}^rC_{3^t}^{4sn_t}\big)$,  if   $q \equiv 1$ mod $4$;
				\item $C_{q-1}^2 \times C_{q^{2}-1} \times \big(\prod_{{l>1}, ~{l|s}}(C_{q^{d_l}-1}^{2e_l}\times C_{q^{d'_l}-1}^{e'_l}) \big)\times \big(\prod_{t=1}^r(C_{3^t}^{2sn_t}\times C_{3^t}^{sn'_t})\big)$,  if $q \equiv -1$ mod $4$;
				
				where $d'_l=ord_l(q^2)$, $e'_l=\frac{\phi(l)}{d'_l}$, $n_r=2k$, $n_t=4.3^{r-t-1}k$,  for all $1\leq t < r$ and $n'_t =2n_t$, for all $1 \leq t \leq r$.
			\end{enumerate}
		\end{enumerate}
	\end{theorem}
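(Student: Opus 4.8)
The plan is to peel off the $3$-part of $C_n$ and reduce to Lemma~\ref{11}. Since $(3,s)=1$ we have $C_n\cong C_{3^r}\times C_s$, so, using $F(G\times K)\cong (FG)K$ twice,
$$FH\cong F\bigl(C_{3^r}\times C_s\times C_4\bigr)\cong\bigl(F(C_s\times C_4)\bigr)[C_{3^r}].$$
Because $(4s,3)=1$, the algebra $F(C_s\times C_4)\cong FC_s\otimes_F FC_4$ is semisimple, hence a finite direct sum of field extensions of $F$, say $F(C_s\times C_4)\cong\bigoplus_iF_{m_i}$. Then $FH\cong\bigoplus_iF_{m_i}C_{3^r}$ and so $U(FH)\cong\prod_iU\bigl(F_{m_i}C_{3^r}\bigr)$. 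If $r=0$ the $i$-th factor is just $C_{q^{m_i}-1}$; if $r\ge1$, applying Lemma~\ref{11} over the field $F_{m_i}$ (which has $q^{m_i}=3^{m_ik}$ elements, so with $k$ replaced by $m_ik$) gives
$$U\bigl(F_{m_i}C_{3^r}\bigr)\cong\Bigl(\prod_{t=1}^rC_{3^t}^{\,m_in_t}\Bigr)\times C_{q^{m_i}-1},$$
with $n_r=2k$ and $n_t=4\cdot3^{r-t-1}k$ for $1\le t<r$, exactly the parameters in the statement. So the computation reduces to (a) identifying the multiset $\{m_i\}$ and (b) collecting the cyclic and $3$-group factors.

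For (a) I use Lemmas~\ref{9*} and~\ref{40}. By Lemma~\ref{9*}, $FC_4\cong F^4$ if $q\equiv1\bmod4$ and $FC_4\cong F^2\oplus F_2$ if $q\equiv-1\bmod4$; by Lemma~\ref{40}, $FC_s\cong F\oplus\bigl(\oplus_{l>1,\,l\mid s}F_{d_l}^{e_l}\bigr)$ with $d_l=ord_l(q)$, $e_l=\phi(l)/d_l$. Hence, if $q\equiv1\bmod4$ then $F(C_s\times C_4)\cong(FC_s)^4$, with components $F$ of multiplicity $4$ and each $F_{d_l}$ of multiplicity $4e_l$. If $q\equiv-1\bmod4$ then $F(C_s\times C_4)\cong(FC_s)^2\oplus F_2C_s$; applying Lemma~\ref{40} once more, now over $F_2$ (which has $q^2$ elements), gives $F_2C_s\cong F_2\oplus\bigl(\oplus_{l>1,\,l\mid s}F_{2d'_l}^{\,e'_l}\bigr)$, where $d'_l=ord_l(q^2)$, $e'_l=\phi(l)/d'_l$, and $F_{2d'_l}$ (the degree-$d'_l$ extension of $F_2$) has $q^{2d'_l}$ elements. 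So the components are $F\ (\times2)$, $F_{d_l}\ (\times2e_l)$, $F_2\ (\times1)$, and $F_{2d'_l}\ (\times e'_l)$.

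For (b) I substitute into $U(FH)\cong\prod_iU(F_{m_i}C_{3^r})$. The cyclic parts $C_{q^{m_i}-1}$ assemble at once into the products displayed in the theorem, the components $F_{2d'_l}$ (of cardinality $q^{2d'_l}$) supplying the factors indexed by $d'_l$. When $r\ge1$, each component $F_{m_i}$ additionally contributes $\prod_{t=1}^rC_{3^t}^{\,m_in_t}$, so the exponent of $C_{3^t}$ is $n_t\sum_im_i$; since a copy of $FC_s$ contributes $1+\sum_{l>1,\,l\mid s}e_ld_l=\sum_{l\mid s}\phi(l)=s$ to $\sum_im_i$ and the single copy of $F_2C_s$ contributes $2+\sum_{l>1,\,l\mid s}e'_l(2d'_l)=2s$, one gets $\sum_im_i=4s$ in both cases, i.e.\ $C_{3^t}^{4sn_t}$ — which in the $q\equiv-1\bmod4$ case one rewrites as $C_{3^t}^{2sn_t}\times C_{3^t}^{sn'_t}$ with $n'_t=2n_t$, separating the contribution of the two copies of $FC_s$ from that of $F_2C_s$, to match the statement. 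Reassembling the cyclic and $3$-group factors produces the four listed cases.

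The only real friction is bookkeeping in the $q\equiv-1\bmod4$ case: one must carry out the base change from $F$ to $F_2$ correctly, so that a component $F_{2d'_l}C_{3^r}$ feeds the exponent $2d'_lk$ into Lemma~\ref{11}, and one must apply $\sum_{l\mid s}\phi(l)=s$ to the right copies of $FC_s$. It is also worth verifying once that decomposing $FC_4$ first agrees with decomposing each $F_{d_l}C_4$ separately, which amounts to the elementary identity $ord_l(q^2)=ord_l(q)/\gcd(ord_l(q),2)$; this presents no difficulty. Everything else is routine direct-product arithmetic.
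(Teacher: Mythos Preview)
Your argument is correct and uses the same three ingredients as the paper---Lemma~\ref{9*} for $FC_4$, Lemma~\ref{40} for the prime-to-$3$ cyclic part, and Lemma~\ref{11} for $C_{3^r}$---so the approaches are essentially the same. The only difference is the order of peeling: the paper splits off $C_4$ first, writing $FH\cong(FC_4)C_n$ and hence $FH\cong(FC_n)^4$ or $(FC_n)^2\oplus F_2C_n$, and only then factors $C_n\cong C_s\times C_{3^r}$; you instead isolate $C_{3^r}$ first and decompose $F(C_s\times C_4)$ afterwards. Both orderings lead to the same multiset of field summands and the same collection of $C_{3^t}$ exponents via $\sum_{l\mid s}\phi(l)=s$, so this is a cosmetic reorganization rather than a different method.
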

	\begin{proof} As $FH\cong (FC_4)C_n$, so using Lemma~\ref{9*}, we have
		$$FH\cong 
		\begin{cases}
		(FC_n)^4, & \text{if $q\equiv 1$ mod $4$;}\\
		(FC_n)^2 \oplus F_2C_n, &\text{if $q\equiv -1$ mod $4$.}
		\end{cases}$$
		\begin{enumerate}
			\item If $3\nmid n$, then  by Lemma~\ref{40},
			%\begin{align*}
			$$FC_n\cong F \oplus\Big( \oplus_{l>1,~l|n}F_{d_l}^{e_l}\Big)$$
			%\end{align*}
			and so
			%\begin{align*}
			$$F_2C_n \cong F_2 \oplus \Big(\oplus_{l>1,~l|n}F_{d'_l}^{e'_l}\Big),$$
			where $d'_l=ord_l(q^2)$ and $e'_l=\frac{\phi(l)}{d'_l}$.
			%	\end{align*}
			Hence
			$$FH\cong 
			\begin{cases}
			F^4 \oplus (\oplus_{l>1,~l|n}F_{d_l}^{4e_l}), & \text{if $q\equiv 1$ mod $4$;}\\
			F^2 \oplus F_2 \oplus\big( \oplus_{l>1,~l|n}(F_{d_l}^{2e_l} \oplus F_{d'_l}^{e'_l})\big), &\text{if $q\equiv -1$ mod $4$.}
			\end{cases}$$
			It is obvious that
			$$d_l'=\begin{cases}
			d_l/2, & \text{if $d_l$ is even;}\\
			d_l, & \text{if $d_l$ is odd}.
			\end{cases}$$
			Also
			$$e_l'=\begin{cases}
			2e_l, & \text{if $d_l$ is even;}\\
			e_l, & \text{if $d_l$ is odd}.
			\end{cases}$$
			
			\item If $3|n$, then by Lemma~\ref{40},
			\begin{align*}
			FC_n &\cong (FC_s)C_{3^r},\\
			&\cong \big(F \oplus ( \oplus_{l>1,~l|s}F_{d_l}^{e_l} )\big)C_{3^r},\\
			&\cong FC_{3^r} \oplus \big(\oplus_{l>1,~l|s}(F_{d_l}C_{3^r})^{e_l}\big).
			\end{align*}
			%By Lemma~\ref{42},
			By Lemma~\ref{11},
			$$U(FC_{3^r}) \cong C_{3^k-1} \times \Big(\prod_{t=1}^rC_{3^t}^{n_t}\Big)$$
			where $n_r=2k$, $n_t=4.3^{r-t-1}k$.	Thus $$U(F_{d_l}C_{3^r})^{e_l} \cong C_{3^{d_lk}-1}^{e_l} \times \Big( \prod_{t=1}^rC_{3^t}^{\phi(l)n_t}\Big).$$
			Since $\sum_{{l|s}}\phi(l)=s$,
			$$U(FC_n) \cong C_{3^k-1} \times \Big(\prod_{{l>1}, ~{l|s}} C_{3^{d_lk}-1}^{e_l}\Big) \times \Big(\prod_{t=1}^r C_{3^t}^{sn_t}\Big)$$
			and 
			$$U(F_2C_n) \cong C_{3^{2k}-1} \times \Big(\prod_{{l>1}, ~{l|s}} C_{3^{d'_lk}-1}^{e'_l}\Big) \times \Big(\prod_{t=1}^r C_{3^t}^{sn'_t}\Big),$$
			where $n'_t = 2n_t$, for all $1\leq t \leq r$.
			Hence the claim.
		\end{enumerate}	
	\end{proof}
	\begin{theorem}	Let $F$ be a finite field of characteristic $p>3$ containing $q=p^k$ elements and let $G=C_n \times Q_{12}$ where $n=p^rs$, $r\geq 0$ such that $(p, s)=1$. If $V=1+J(FG)$, then 
		$U(FG)/V$ is isomorphic to
		\begin{enumerate}
			\item $C_{q-1}^4 \times GL(2, F)^2 \times \Big(\prod_{l>1, ~l|s}\big(C_{q^{d_l}-1}^{4} \times GL(2, F_{d_l})^{2}\big)^{e_l}\Big)$, if $q\equiv 1, 5$ mod $12$;
			\item	$C_{q-1}^2 \times C_{q^2-1} \times GL(2, F)^2 \times \Big(\prod_{l>1, ~l|s}\big(C_{q^{d_l}-1}^{2} \times C_{q^{2d_l}-1}\times GL(2, F_{d_l})^{2}\big)^{e_l}\Big)$, if $q\equiv -1, -5$ mod $12$;
		\end{enumerate}
		where $V$ is a group of exponent $p^r$ and order $p^{12sk(p^r-1)}$. 
	\end{theorem}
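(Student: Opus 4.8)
The plan is to reduce the computation to the semisimple quotient of $FG$. Put $K=\langle z^{s}\rangle\cong C_{p^{r}}$, which is a central subgroup of $G=C_{n}\times Q_{12}$. The natural epimorphism $FG\to F(G/K)\cong F(C_{s}\times Q_{12})$ has kernel $\omega(K)$; since $K$ is a $p$-group, $\omega(K)$ is nilpotent --- indeed $\omega(K)^{p^{r}}=0$, because $(z^{s}-1)^{p^{r}}=z^{sp^{r}}-1=0$ in characteristic $p$ --- while $F(C_{s}\times Q_{12})$ is semisimple by Maschke's theorem since $p\nmid 12s=|C_{s}\times Q_{12}|$. Hence $J(FG)=\omega(K)$, so $V=1+\omega(K)$ and
$$U(FG)/V\;\cong\;U\big(FG/J(FG)\big)\;\cong\;U\big(F(C_{s}\times Q_{12})\big).$$
The numerical facts about $V$ then drop out: since $\dim_{F}J(FG)=|G|-|C_{s}\times Q_{12}|=12sp^{r}-12s=12s(p^{r}-1)$, we get $|V|=|J(FG)|=q^{12s(p^{r}-1)}=p^{12sk(p^{r}-1)}$; and using the characteristic-$p$ identity $(1+a)^{p^{m}}=1+a^{p^{m}}$ (the intermediate binomial coefficients $\binom{p^{m}}{i}$, $0<i<p^{m}$, vanish mod $p$) together with $\omega(K)^{p^{r}}=0$, every $v\in V$ satisfies $v^{p^{r}}=1$, while $z^{s}\in V$ has order $p^{r}$, so $V$ has exponent exactly $p^{r}$.

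It remains to identify $U\big(F(C_{s}\times Q_{12})\big)$. From $F(C_{s}\times Q_{12})\cong(FC_{s})Q_{12}$ and Lemma~\ref{40} (available since $(p,s)=1$), $FC_{s}\cong F\oplus\bigoplus_{l>1,\,l\mid s}F_{d_{l}}^{e_{l}}$, so $F(C_{s}\times Q_{12})\cong FQ_{12}\oplus\bigoplus_{l>1,\,l\mid s}(F_{d_{l}}Q_{12})^{e_{l}}$ and
$$U\big(F(C_{s}\times Q_{12})\big)\;\cong\;U(FQ_{12})\times\prod_{l>1,\,l\mid s}U(F_{d_{l}}Q_{12})^{e_{l}}.$$
Thus the theorem reduces to computing $U(EQ_{12})$ for an arbitrary finite field $E$ of characteristic $p>3$, and the heart of the matter is the Wedderburn decomposition $EQ_{12}\cong EC_{4}\oplus M_{2}(E)^{2}$. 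I would obtain this via the central involution $y^{2}$ of $Q_{12}$: the idempotents $\tfrac12(1\pm y^{2})$ (available as $p\neq 2$) give $EQ_{12}\cong E\big[Q_{12}/\langle y^{2}\rangle\big]\oplus EQ_{12}\,\tfrac{1-y^{2}}{2}$. The first summand is the group algebra of the dihedral group $D_{6}\cong S_{3}$, which equals $E\oplus E\oplus M_{2}(E)$ (its irreducible $2$-dimensional character is rational and $E$ is finite, so that block is a full matrix ring over $E$). The second summand has $E$-dimension $6$; its linear part comes from the two faithful characters of the abelianization $C_{4}$ of $Q_{12}$ and contributes $E\oplus E$ or the quadratic extension of $E$ according as $|E|\equiv 1$ or $-1\bmod 4$, while its remaining $4$-dimensional block has centre $E$ (the faithful $2$-dimensional character of $Q_{12}$ being rational) and hence is $M_{2}(E)$ since $E$ is finite. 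Recombining the linear parts gives exactly $EC_{4}$, so $EQ_{12}\cong EC_{4}\oplus M_{2}(E)^{2}$, $U(EQ_{12})\cong U(EC_{4})\times GL(2,E)^{2}$, and $U(EC_{4})$ is read off from Lemma~\ref{9*}. Specialising to $E=F$ and $E=F_{d_{l}}$ and using $q\equiv 1,5\bmod 12\iff q\equiv 1\bmod 4$ and $q\equiv -1,-5\bmod 12\iff q\equiv -1\bmod 4$ assembles the two displayed formulas.

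The hard part is the decomposition $EQ_{12}\cong EC_{4}\oplus M_{2}(E)^{2}$: one must be sure that the two $2$-dimensional irreducible representations of $Q_{12}$ stay absolutely irreducible over $E$ with scalar ring exactly $E$ --- that neither block becomes $M_{2}$ of a proper extension of $E$, nor carries a noncommutative division algebra. Over $\mathbb{C}$ this is just the character table of $Q_{12}$ (one $2$-dimensional representation factors through the dihedral quotient of order $6$, the other is faithful, and both have integer character); over a finite field it is guaranteed by the triviality of the Brauer group of a finite field together with this rationality, which is precisely what pins both blocks down to $M_{2}(E)$. Alternatively one may simply quote the known structure of $U(FQ_{12})$ in characteristic $p>3$ from the references in the Introduction. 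A further point, requiring exactly the same care as in the proof of the preceding theorem on $U(F(C_{n}\times C_{4}))$, is that the residue of $|F_{d_{l}}|=q^{d_{l}}$ modulo $4$ --- which decides whether $F_{d_{l}}C_{4}$ contributes $F_{d_{l}}^{4}$ or $F_{d_{l}}^{2}\oplus F_{2d_{l}}$ --- must be tracked component by component; for components with $d_{l}$ even this residue is governed by $ord_{l}(q^{2})$ rather than $ord_{l}(q)$, which is the role of the quantities $d'_{l},e'_{l}$ there.
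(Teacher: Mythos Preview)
Your argument follows the paper's exactly: take $K=\langle z^{s}\rangle$, identify $J(FG)=\omega(K)$, pass to $F(C_{s}\times Q_{12})$, split $FC_{s}$ by Lemma~\ref{40}, and invoke the Wedderburn decomposition of $F_{d_l}Q_{12}$. The only substantive difference is that the paper simply quotes \cite[Theorem~4.2]{T1} for $FQ_{12}\cong F^{4}\oplus M(2,F)^{2}$ (resp.\ $F^{2}\oplus F_{2}\oplus M(2,F)^{2}$), whereas you rederive it via the central idempotents $\tfrac12(1\pm y^{2})$, splitting $EQ_{12}$ into $E[Q_{12}/\langle y^{2}\rangle]\cong ES_{3}$ and a $6$-dimensional complement, and then appealing to rationality of characters and triviality of the Brauer group of a finite field; this is a self-contained alternative to the citation and gives a cleaner packaging as $EQ_{12}\cong EC_{4}\oplus M_{2}(E)^{2}$.

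Your closing caveat is well taken and in fact sharper than the paper's own proof: when $q\equiv -1\bmod 4$ but $d_{l}$ is even, one has $q^{d_{l}}\equiv 1\bmod 4$, so $F_{d_{l}}C_{4}\cong F_{d_{l}}^{4}$ rather than $F_{d_{l}}^{2}\oplus F_{2d_{l}}$. The paper applies the $q\equiv -1,-5\bmod 12$ branch uniformly to every $F_{d_{l}}$ without checking $q^{d_{l}}\bmod 4$, which is exactly the component-by-component issue you flag (and which the paper handles carefully only in the adjacent result on $U(F(C_{n}\times C_{4}))$ via the $d'_{l},e'_{l}$ notation).
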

	\begin{proof} Let $K=\langle z^s \rangle$. Then $G/K \cong H = C_s\times Q_{12}$. If $\theta : FG \rightarrow FH$ is the canonical ring epimorphism, then 
		$J(FG)=ker(\theta)$, $FG/J(FG) \cong FH$ and $dim_{F}(J(FG))=12s(p^r-1)$. Hence 
		$U(FG) \cong V \rtimes U(FH)$,
		where $V= 1+J(FG)$. Clearly, exponent of $V=p^r$ and $|V|=p^{12sk(p^r-1)}$.
		
		By Lemma~\ref{40}, 
		\begin{align*}
		FH &\cong (FC_{s})Q_{12},\\
		&\cong \big(F\oplus (\oplus_{{l>1, ~l|s}}F_{d_l}^{e_l})\big)Q_{12},\\
		&\cong FQ_{12} \oplus \big( \oplus_{l>1, ~l|s}(F_{d_l}Q_{12})^{e_l}\big).
		\end{align*}
		%	where $d_l=ord_l(q)$ and $e_l=\frac{\phi(l)}{d_l}$.
		Now, by \cite[Theorem 4.2]{T1},
		$$FQ_{12} \cong 
		\begin{cases}
		F^4 \oplus M(2, F)^2, & \text{ if $q\equiv 1, 5$ mod $12$;}\\
		F^2 \oplus F_2 \oplus M(2, F)^2, &\text{if $q \equiv -1, -5$ mod $12$.}
		\end{cases}$$ 
		and so
		$$(F_{d_l}Q_{12})^{e_l} \cong 
		\begin{cases}
		F_{d_l}^{4e_l}\oplus M(2, F_{d_l})^{2e_l}, & \text{ if $q\equiv 1, 5$ mod $12$;}\\
		F_{d_l}^{2e_l} \oplus F_{2d_l}^{e_l} \oplus M(2, F_{d_l})^{2e_l}, &\text{if $q \equiv -1, -5$ mod $12$.}
		\end{cases}$$ 
	\end{proof}
	In the above theorem, if $r=0$, then we have the unit group of the semisimple group algebra $FG$ given by 
	\begin{enumerate}
		\item $U(FG) \cong C_{q-1}^4 \times GL(2, F)^2 \times\Big( \prod_{l>1, ~l|n}\big(C_{q^{d_l}-1}^{4} \times GL(2, F_{d_l})^{2}\big)^{e_l}\Big)$, if $q\equiv 1, 5$ mod $12$.
		\item $U(FG) \cong C_{q-1}^2 \times C_{q^2-1} \times GL(2, F)^2 \times \Big(\prod_{l>1, ~l|n}\big(C_{q^{d_l}-1}^{2} \times  C_{q^{2d_l}-1}\times GL(2, F_{d_l})^{2}\big)^{e_l}\Big)$, if $q\equiv -1, -5$ mod $12$.
	\end{enumerate}

\section{Units in  $F(C_n \times D_{12})$}
We shall use the following presentation for $C_n \times D_{12}$:
$$C_n \times D_{12} = \langle x, y, z \mid x^{6}=y^2=z^n=1, yx=x^{5}y, xz=zx, yz=zy\rangle.$$
\begin{theorem} Let $F$ be a finite field of characteristic $2$ containing $q=2^k$ elements and let $G=C_n \times D_{12}$. If  $n$ is odd, then
	$$U(FG) \cong C_2^{7nk} \rtimes \Bigg( C_{q-1} \times GL(2, F) \times \Big(\prod_{l>1, ~l|n}\big(C_{q^{d_l}-1} \times GL(2, F_{d_l})\big)^{e_l}\Big)\Bigg).$$
\end{theorem}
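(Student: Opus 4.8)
The plan is to follow the proof of the first theorem of Section~2 essentially verbatim in structure, since $n$ odd forces $p=2\nmid n$ and hence $FC_n$ is semisimple. First I would use $F(C_n\times D_{12})\cong (FC_n)D_{12}$ together with Lemma~\ref{40} to obtain
$$FG\cong FD_{12}\oplus\Big(\bigoplus_{l>1,\ l\mid n}(F_{d_l}D_{12})^{e_l}\Big),$$
and hence, since units of a finite direct sum of rings split componentwise,
$$U(FG)\cong U(FD_{12})\times\prod_{l>1,\ l\mid n}U(F_{d_l}D_{12})^{e_l}.$$
Everything then reduces to the structure of $U(F'D_{12})$ for an arbitrary finite field $F'$ of characteristic $2$.

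For that base case I would quote the known description of $U(FD_{12})$ in characteristic $2$ (see \cite{J2, FM, T1}):
$$U(FD_{12})\cong C_2^{7k}\rtimes\big(C_{q-1}\times GL(2, F)\big).$$
If a self-contained argument is preferred it is short: from $D_{12}\cong C_2\times S_3$ one gets $FD_{12}\cong (FS_3)C_2$, and $FS_3\cong FC_2\oplus M(2, F)$ (the non-principal block of $FC_3$ is $F^2$ or $F_2$ according as $q\equiv 1$ or $q\equiv 2$ modulo $3$, and in either case the involution from $S_3$ turns it into $M(2, F)$), so $FD_{12}\cong F[C_2\times C_2]\oplus M(2, FC_2)$. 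Thus $J(FD_{12})$ has dimension $3+4=7$, $FD_{12}/J(FD_{12})\cong F\oplus M(2, F)$, and — using $J(FC_2)^2=0$ together with the fact that in characteristic $2$ every element of $\omega(C_2\times C_2)$ squares to zero — the group $V=1+J(FD_{12})$ is elementary abelian of order $2^{7k}$; combined with $U(FA)\cong(1+J(FA))\rtimes U(FA/J(FA))$ from the introduction this yields the displayed isomorphism. I expect this to be the main point: the only genuine content is the decomposition of $FD_{12}$ and the verification that $V$ is \emph{elementary} abelian, which is precisely why a power of $C_2$ (and not, as in the $Q_{12}$ theorem where $FC_4$ contributes order-$4$ units, a mixture of $C_2$ and $C_4$) appears.

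Finally I would replace $F$ by $F_{d_l}$ in the base case — $F_{d_l}$ has $q^{d_l}$ elements, so the role of $k$ is played by $d_lk$ — to get $U(F_{d_l}D_{12})\cong C_2^{7d_lk}\rtimes\big(C_{q^{d_l}-1}\times GL(2, F_{d_l})\big)$, raise this to the $e_l$-th power, and substitute into the product above. Using $d_le_l=\phi(l)$ and $\sum_{l\mid n}\phi(l)=n$, the $2$-parts collect into $C_2^{7nk}$; and since in each factor the displayed $2$-group is normal, the routine identity $\prod_i(A_i\rtimes B_i)\cong\big(\prod_iA_i\big)\rtimes\big(\prod_iB_i\big)$ lets one factor $C_2^{7nk}$ out as a normal subgroup complemented by $C_{q-1}\times GL(2, F)\times\prod_{l>1,\ l\mid n}\big(C_{q^{d_l}-1}\times GL(2, F_{d_l})\big)^{e_l}$, which is exactly the claimed isomorphism. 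No step beyond the base case $n=1$ presents any real difficulty; the passage to general odd $n$ is the same formal computation already carried out for $C_n\times Q_{12}$.
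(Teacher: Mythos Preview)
Your proposal is correct and follows essentially the same route as the paper: decompose $FG$ via Lemma~\ref{40}, invoke the known structure of $U(F'D_{12})$ in characteristic $2$ for each field factor, and collect the $2$-parts using $\sum_{l\mid n}\phi(l)=n$. The only difference is cosmetic: the paper simply cites \cite[Theorem~2.6]{N5} for $U(FD_{12})\cong C_2^{7k}\rtimes\big(C_{q-1}\times GL(2,F)\big)$, whereas you additionally sketch a self-contained derivation via $D_{12}\cong C_2\times S_3$ and $FS_3\cong FC_2\oplus M(2,F)$ (which is correct, and nicely explains why $V$ is elementary abelian).
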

\begin{proof}
	Since $n$ is odd, $FC_n$ is semisimple. Thus by Lemma~\ref{40},
	\begin{align*}
	FG &\cong (FC_n)D_{12},\\
	&\cong \big(F \oplus  (\oplus_{l>1,~l|n}F_{d_l}^{e_l} )\big)D_{12},\\
	&\cong FD_{12} \oplus \big(\oplus_{l>1,~l|n}(F_{d_l}D_{12})^{e_l}\big).
	\end{align*}
		Now by \cite[Theorem 2.6]{N5},
%	Now by Lemma~\ref{60},
	$$U(FD_{12}) \cong C_2^{7k} \rtimes \Big(C_{q-1} \times GL(2, F)\Big)$$
	and so
	$$U(F_{d_l}D_{12})^{e_l} \cong C_2^{7\phi(l)k} \rtimes \Big(C_{q^{d_l}-1}^{e_l} \times GL(2, F_{d_l})^{e_l}\Big).$$
	Since
	$\sum_{{l|n}}\phi(l)=n$, 
	$$U(FG) \cong C_2^{7nk} \rtimes \Bigg(C_{q-1} \times GL(2, F) \times \Big(\prod_{{l>1}, ~{l|n}} \big(C_{q^{d_l}-1} \times GL(2, F_{d_l})\big)^{e_l}\Big)\Bigg).$$
\end{proof}	
\begin{theorem} Let $F$ be a finite field of characteristic $3$ containing $q=3^k$ elements and let $G=C_n \times D_{12}$. Then
	$$U(FG) \cong (C_3^{6nk} \rtimes C_3^{2nk}) \rtimes U\big(F(C_n \times C_2^2)\big).$$
\end{theorem}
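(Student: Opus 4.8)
The plan is to follow the proof of Theorem~2.2 almost verbatim, since $D_{12}\cong C_3\rtimes C_2^2$ with $\langle x^{2}\rangle$ playing the role of $\langle x\rangle$ there. Put $w=x^{2}$, so $K=\langle w\rangle$ is a normal subgroup of $G$ of order $3$, and let $H=\langle x^{3},y,z\rangle$. Since $x^{3}$ and $y$ are commuting involutions and $z$ is central, $H\cong C_n\times C_2^2$; moreover $K\cap H=1$ and $|K|\,|H|=12n=|G|$, so $G=K\rtimes H$ and in particular $G/K\cong H$. Composing the canonical algebra epimorphism $\theta\colon FG\to F(G/K)\cong FH$ with the inclusion $FH\hookrightarrow FG$ splits the induced group epimorphism $U(FG)\to U(FH)$, whence $U(FG)\cong V\rtimes U(FH)=V\rtimes U\big(F(C_n\times C_2^2)\big)$ with $V=\ker\theta=1+\omega(K)$. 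Here $\dim_F\omega(K)=12n-4n=8n$, so $|V|=3^{8nk}$, while $(w-1)^{3}=0$ in characteristic $3$ gives $\omega(K)^{3}=0$ and hence $V^{3}=1$. Note that $3\nmid n$ is not needed; any $3$-part of $n$ is absorbed into the factor $U\big(F(C_n\times C_2^2)\big)$.

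It remains to show $V\cong C_3^{6nk}\rtimes C_3^{2nk}$, and here I would reproduce the two steps of Theorem~2.2. Expanding a general element of $V$ over the transversal $\{x^{3\epsilon}y^{j}z^{l}\}$ of $K$ in $G$ and using $wyw^{-1}=yw$, one finds that conjugation by $w$ fixes each basis vector $(w^{i}-1)x^{3\epsilon}z^{l}$ of the $y^{0}$-part of $\omega(K)$ and sends $(w^{i}-1)x^{3\epsilon}yz^{l}\mapsto(w^{i-1}-w^{2})x^{3\epsilon}yz^{l}$ on the $y^{1}$-part. \emph{Step 1:} solving $vw=wv$ shows that $C_V(w)$ is obtained by leaving the $4n$ parameters on the $y^{0}$-part free and imposing one relation $b_1=b_2$ for each of the $2n$ pairs of $y^{1}$-parameters, so $|C_V(w)|=3^{6nk}$; a direct multiplication shows $C_V(w)$ is abelian, hence $C_V(w)\cong C_3^{6nk}$. \emph{Step 2:} introduce the abelian subgroup
$$T=\Big\{\,1+\sum_{j=0}^{n-1}\big(\widehat{w}(t_{j1}+t_{j2}x^{3})+(w+2w^{2})(t_{j3}y+t_{j4}x^{3}y)\big)z^{j}\ \Big|\ t_{ji}\in F\,\Big\}\cong C_3^{4nk},$$
the analogue of the subgroup $T$ in Theorem~2.2 with $\widehat{x},\,y^{2},\,x+2x^{2}$ replaced by $\widehat{w}=(w-1)^{2},\,x^{3},\,w+2w^{2}$. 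One then verifies that $T$ normalizes $C_V(w)$, that $C_V(w)\cap T=\{1+\widehat{w}\sum_{j}(t_{j1}+t_{j2}x^{3})z^{j}\}\cong C_3^{2nk}$, and that a complement $W\cong C_3^{2nk}$ of this intersection inside $T$ satisfies $C_V(w)\cap W=1$ and $|C_V(w)W|=3^{8nk}=|V|$. Therefore $V=C_V(w)\rtimes W\cong C_3^{6nk}\rtimes C_3^{2nk}$, and combined with the first paragraph this gives the theorem.

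The laborious part is the string of group-algebra products in Step~2 — checking that $T$ is abelian, that it normalizes $C_V(w)$, and computing $C_V(w)\cap T$ and the complement $W$ — but none of this is conceptually hard once the basis of $FG$ has been fixed. The single genuine decision is the choice of $T$: as in Theorem~2.2 one is led to the candidate above through the radical filtration $\omega(FK)\supset\omega(FK)^{2}\supset 0$ of $FK=FC_3$ (with $\widehat{w}=(w-1)^{2}$ in the bottom layer and $w+2w^{2}$ in the top), and the point that must be checked carefully is that this $T$ simultaneously meets the normalization, intersection, and order requirements listed above.
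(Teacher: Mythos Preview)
Your proof is correct and follows the same overall architecture as the paper: take $K=\langle x^{2}\rangle$, split $U(FG)\cong V\rtimes U(FH)$ with $H\cong C_n\times C_2^2$, identify $C_V(x^{2})\cong C_3^{6nk}$ in Step~1, and exhibit a normalizing complement of order $3^{2nk}$ in Step~2.

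The only genuine difference is in Step~2. You carry over the subgroup $T\cong C_3^{4nk}$ from Theorem~2.2 verbatim (with $\widehat{x},y^{2},y,y^{3}$ replaced by $\widehat{w},x^{3},y,x^{3}y$), compute $C_V(w)\cap T\cong C_3^{2nk}$, and then pass to a complement $W$ of this intersection inside $T$. The paper instead writes down a smaller subgroup
\[
S=\Big\{\,1+\sum_{j=0}^{n-1}x^{2}(1-x^{2})(s_{j1}+s_{j2}x^{3})(1+y)z^{j}\ \Big|\ s_{ji}\in F\,\Big\}\cong C_3^{2nk}
\]
directly, checks $S$ normalizes $C_V(x^{2})$ and $C_V(x^{2})\cap S=1$, and concludes $V=C_V(x^{2})\rtimes S$ without the extra complement step. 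Note that $x^{2}(1-x^{2})=w+2w^{2}$ in characteristic~$3$, so the paper's $S$ and your $T$ share the same ``$y$-part''; the paper just pairs it with a matching $(w+2w^{2})(1+x^{3})$ piece rather than an independent $\widehat{w}$ piece. Your route buys a uniform treatment of the $Q_{12}$ and $D_{12}$ cases; the paper's buys a shorter verification (no intersection--complement argument). Either way the result is the same.
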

\begin{proof} Let $K = \langle x^{2} \rangle$. Then $G/K \cong H =
	\langle x^3, y, z \rangle=C_2 \times C_2 \times C_n$. Thus from the ring epimorphism $ FG \rightarrow FH$ given by  
	\begin{align*}
	&\sum_{l=0}^1\sum_{j=0}^{n-1}\sum_{i=0}^2x^{2i}(a_{i+6(j+nl)}+x^{3}a_{i+6(j+nl)+3})y^lz^j \mapsto \\&	\sum_{l=0}^1\sum_{j=0}^{n-1}\sum_{i=0}^2(a_{i+6(j+nl)}+x^{3}a_{i+6(j+nl)+3})y^lz^j
	\end{align*}
	we get a	group epimorphism $\theta : U(FG) \to U(FH)$. 
	
	Further, from the inclusion map $i : FH \rightarrow FG$, we have  $i : U(FH) \rightarrow U(FG)$ 
	such that $\theta i =1_{U(FH)}$. Therefore $U(FG)$ is a split extension of $U(FH)$ by $V = ker(\theta)=1+\omega(K)$. Hence $$U(FG) \cong V \rtimes U(FH).$$
	
	Let $u=\sum_{l=0}^1\sum_{j=0}^{n-1}\sum_{i=0}^2x^{2i}(a_{i+6(j+nl)}+x^{3}a_{i+6(j+nl)+3})y^lz^j \in U(FG)$, then $u \in V$ if and only if $\sum_{i=0}^2a_i=1$ and $\sum_{i=0}^2a_{i+3j}=0$ for $j=1, 2, \dots, (4n-1)$. Therefore  $$V=\Big\{1+\sum_{l=0}^{1}\sum_{j=0}^{n-1}\sum_{i=1}^2(x^{2i}-1)(b_{i+4(j+nl)}+x^{3}b_{i+4(j+nl)+2})y^lz^j \mid b_i \in F\Big\}$$
	and $|V|=3^{8nk}$. Since $\omega(K)^3 = 0$, $V^3 = 1$. We now study the structure of $V$ in the following steps:\\
	
	{\bf Step 1:} $C_V(x^{2})=\{v\in V \mid vx^{2}=x^{2}v\}\cong C_3^{6nk}$.\\
	
	If $v=1+ \sum_{l=0}^{1}\sum_{j=0}^{n-1}\sum_{i=1}^2(x^{2i}-1)(b_{i+4(j+nl)}+x^{3}b_{i+4(j+nl)+2})y^lz^j\in C_V(x^{2})=\{v\in V \mid vx^{2}=x^{2}v\}$, then
	$vx^{2}-x^{2}v=\widehat{x^{2}}\sum_{j=0}^{n-1}\big((b_{1+4(j+n)}-b_{2+4(j+n)})+x^{3}(b_{3+4(j+n)}-b_{4+4(j+n)})\big)yz^j$.
	Thus $v\in C_V(x^{2})$ if and only if $b_{i+4(j+n)}=b_{1+i+4(j+n)}$  for $j=0, 1, \dots, n-1$ and $i
	=1, 3$.
	Hence
	\begin{align*}
	C_V(x^{2})=&\Big\{1+\sum_{j=0}^{n-1}\sum_{i=1}^2(x^{2i}-1)(c_{i+4j}+x^3c_{i+4j+2})z^{j}\\&+\widehat{x^{2}}\sum_{j=0}^{n-1}\sum_{i=0}^1c_{n(i+4)+j+1}x^{3i}yz^j \mid c_i \in F\Big\}.
	\end{align*}
	So $C_V(x^{2})$ is an abelian subgroup of $V$ and $|C_V(x^{2})| = 3^{6nk}$. Therefore $C_V(x^{2}) \cong C_3^{6nk}$.
	
	{\bf Step 2:}
	Let $S$ be the subset of $V$ consisting of elements of the form
	$$1+\sum_{j=0}^{n-1}x^{2}(1-x^2)(s_{j_{1}}+s_{j_{2}}x^3)(1+y)z^j,$$
	where $s_{j_{1}}, s_{j_{2}} \in F$. Then $S$ is an abelian subgroup of $V$ and 
	$S\cong C_3^{2nk}.$
	
	Let 	$$s_1 =1+\sum_{j=0}^{n-1}x^{2}(1-x^2)(r_{j_{1}}+r_{j_{2}}x^3)(1+y)z^j \in S$$	
	and 
	$$s_2=1+\sum_{j=0}^{n-1}x^{2}(1-x^2)(t_{j_{1}}+t_{j_{2}}x^3)(1+y)z^j\in S.$$
	Then
	\begin{align*}
	s_1s_2=1+\sum_{j=0}^{n-1}x^{2}(1-x^2)\Big((r_{j_1}+t_{j_{1}})+(r_{j_{2}}+t_{J_2})x^3\Big)(1+y)z^j\in S.
	\end{align*}
	So $S$ is an abelian subgroup of $V$ and $|S| = 3^{2nk}$. Therefore $S\cong C_3^{2nk}$.
	
	Now, let	
	\begin{align*}
	c=&1+\sum_{j=0}^{n-1}\sum_{i=1}^2(x^{2i}-1)(c_{i+4j}+x^3c_{i+4j+2})z^{j}\\&+\widehat{x^{2}}\sum_{j=0}^{n-1}\sum_{i=0}^1c_{n(i+4)+j+1}x^{3i}yz^j\in C_V(x^2)
	\end{align*}	
	and 
	$$s= 1+\sum_{j=0}^{n-1}x^{2}(1-x^2)(s_{j_{1}}+s_{j_{2}}x^3)(1+y)z^j\in S.$$  Then
	\begin{align*}
	c^s= c+\widehat{x^2}(\gamma_{1}+\gamma_{2}x^3)y,
	\end{align*}
	where 
	\begin{align*}
	\gamma_{1}=&\sum_{j=0}^{n-1}\sum_{i=0}^{n-1}\Big(s_{j_{1}}(c_{1+4i}-c_{2+4i})+s_{j_{2}}(c_{3+4i}-c_{4+4i})\Big)z^{i+j},\\
	\gamma_{2}=&\sum_{j=0}^{n-1}\sum_{i=0}^{n-1}\Big(s_{j_{1}}(c_{3+4i}-c_{4+4i})+s_{j_{2}}(c_{1+4i}-c_{2+4i})\Big)z^{i+j}.
	\end{align*}
	Clearly, $c^s \in C_V(x^2)$. Thus $S$ normalizes $C_V(x^{2})$. Since $C_V(x^{2}) \cap S=1$,  $|C_V(x^{2})S|=3^{8nk} = |V|$. Therefore $$V=C_V(x^{2})S\cong C_V(x^{2}) \rtimes S \cong C_3^{6nk} \rtimes C_3^{2nk}.$$
	Hence the claim.
\end{proof}
For $U\big(F(C_n \times C_2^2)\big)$, we prove the following:
\begin{theorem}
	Let $F$ be a finite field of characteristic $3$ containing $q=3^k$ elements and let $H=C_n \times C_2^2$, where $n=3^rs$ such that $r\geq 0$ and $(3, s)=1$. Then $U(FH)$ is isomorphic to 
	\begin{enumerate}
		\item $C_{q-1}^4 \times \big(\prod_{{l>1}, ~{l|n}}C_{q^{d_l}-1}^{4e_l}\big), \text { if } 3\nmid n;$
		\item $C_{q-1}^4 \times \big(\prod_{{l>1}, ~{l|s}}C_{q^{d_l}-1}^{4e_l}\big) \times \big(\prod_{t=1}^rC_{3^t}^{4sn_t}\big), \text { if }  3|n;$
		
		where $n_r=2k$ and $n_t=4k3^{r-t-1}$, for all t, $1 \leq t < r$.
	\end{enumerate}
\end{theorem}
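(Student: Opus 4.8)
The strategy is to push everything through the isomorphism $F(C_n\times C_2^2)\cong (FC_2^2)C_n$ and thereby reduce to the group algebra $FC_n$. Since $\operatorname{char}F=3\neq 2$, the polynomial $X^2-1=(X-1)(X+1)$ splits over $F$, so $FC_2\cong F\oplus F$ and hence $FC_2^2\cong FC_2\otimes_F FC_2\cong F^4$; this is the algebra-level sharpening of Lemma~\ref{5}. Therefore $FH\cong (F^4)C_n\cong (FC_n)^4$, and consequently $U(FH)\cong U(FC_n)^4$. In particular, unlike in the case of $C_n\times C_4$, no dichotomy according to $q\bmod 4$ appears: the elementary abelian $2$-group always contributes exactly four copies of the ground field.

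It then remains to determine $U(FC_n)$ in the two cases. If $3\nmid n$, then $FC_n$ is semisimple and Lemma~\ref{40} gives $FC_n\cong F\oplus\big(\oplus_{l>1,\,l\mid n}F_{d_l}^{e_l}\big)$, so $U(FC_n)\cong C_{q-1}\times\prod_{l>1,\,l\mid n}C_{q^{d_l}-1}^{e_l}$; raising this to the fourth power is exactly the claimed expression for $3\nmid n$. If $3\mid n$, write $n=3^rs$ with $(3,s)=1$ and use $FC_n\cong (FC_s)C_{3^r}$. Since $3\nmid s$, Lemma~\ref{40} yields $FC_s\cong F\oplus\big(\oplus_{l>1,\,l\mid s}F_{d_l}^{e_l}\big)$, hence $FC_n\cong FC_{3^r}\oplus\big(\oplus_{l>1,\,l\mid s}(F_{d_l}C_{3^r})^{e_l}\big)$. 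Applying Lemma~\ref{11} over the field $F_{d_l}$, which has $q^{d_l}=3^{d_lk}$ elements, gives $U(F_{d_l}C_{3^r})\cong C_{q^{d_l}-1}\times\prod_{t=1}^{r}C_{3^t}^{d_ln_t}$, with $n_r=2k$ and $n_t=4k\,3^{r-t-1}$ for $1\le t<r$; the key observation is that the $p$-part exponents in Lemma~\ref{11} scale linearly with the degree $d_l$.

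Combining the pieces and using $d_le_l=\phi(l)$ together with $\sum_{l\mid s}\phi(l)=s$, one obtains
$$U(FC_n)\cong C_{q-1}\times\Big(\prod_{l>1,\,l\mid s}C_{q^{d_l}-1}^{e_l}\Big)\times\Big(\prod_{t=1}^{r}C_{3^t}^{sn_t}\Big),$$
and taking the fourth power yields the stated formula for $3\mid n$. The only step requiring any care is the exponent bookkeeping under extension of scalars from $F$ to $F_{d_l}$; this runs entirely parallel to the computation already carried out for $U(F(C_n\times C_4))$, and I expect no new obstacle — indeed the $C_2^2$ case is strictly easier, since $FC_2^2$ splits as $F^4$ regardless of $q\bmod 4$.
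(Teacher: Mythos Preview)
Your proof is correct and follows essentially the same route as the paper: reduce to $(FC_n)^4$ via $FC_2^2\cong F^4$, then handle $FC_n$ by Lemma~\ref{40} when $3\nmid n$ and by the decomposition $FC_n\cong FC_{3^r}\oplus\big(\oplus_{l>1,\,l\mid s}(F_{d_l}C_{3^r})^{e_l}\big)$ together with Lemma~\ref{11} and the identity $\sum_{l\mid s}\phi(l)=s$ when $3\mid n$. The only cosmetic difference is that you spell out $FC_2^2\cong F^4$ directly rather than invoking Lemma~\ref{5}, and you track the exponent $d_l n_t$ before multiplying by $e_l$, whereas the paper passes straight to $\phi(l)n_t$; the bookkeeping is identical.
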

\begin{proof}By Lemma~ \ref{5}, we have
	\begin{align*}
	FH&\cong (FC_2^2)C_n\cong (FC_n)^4.
	\end{align*}
	\begin{enumerate}
		\item If $3\nmid n$, i.e., if $r=0$, then  by Lemma~\ref{40},
		\begin{equation*}\label{E100}
		FC_n \cong F \oplus \Big(\oplus_{l>1,~l|n}F_{d_l}^{e_l}\Big).
		\end{equation*}
		Hence
		$$U(FH) \cong C_{3^k-1}^4 \times \Big(\prod_{{l>1}, ~{l|n}}C_{3^{d_lk}-1}^{4e_l}\Big).$$
		\item If $3|n$, i.e., if $r>0$, then by Lemma~\ref{40},
		\begin{align*}
		FC_n &\cong (FC_s)C_{3^r},\\
		&\cong\big(F \oplus ( \oplus_{l>1,~l|s}F_{d_l}^{e_l} )\big)C_{3^r},\\
		&\cong FC_{3^r} \oplus \big(\oplus_{l>1,~l|s}(F_{d_l}C_{3^r})^{e_l}\big).
		\end{align*}
			By Lemma~\ref{11},
	%	By \cite[Lemma 2.3]{N4},
		$$U(FC_{3^r}) \cong C_{3^k-1} \times\Big( \prod_{t=1}^rC_{3^t}^{n_t}\Big),$$
		where $n_r=2k$ and $n_t=4k3^{r-t-1}$, for all t, $1 \leq t < r$
		and $$U(F_{d_l}C_{3^r})^{e_l}\cong C_{3^{d_lk}-1}^{e_l} \times \Big(\prod_{t=1}^rC_{3^t}^{\phi(l)n_t}\Big).$$
		Since $\sum_{{l|s}}\phi(l)=s$, 
		$$U(FC_n) \cong C_{3^k-1} \times \Big(\prod_{{l>1}, ~{l|s}} C_{3^{d_lk}-1}^{e_l}\Big) \times \Big(\prod_{t=1}^r C_{3^t}^{sn_t}\Big)$$
		and hence
		$$U(FH) \cong C_{3^k-1}^4 \times \Big( \prod_{{l>1}, ~{l|s}} C_{3^{d_lk}-1}^{4e_l}\Big) \times \Big(\prod_{t=1}^r C_{3^t}^{4sn_t}\Big).$$
	\end{enumerate}	
\end{proof}
\begin{theorem}	Let $F$ be a finite field of characteristic $p>3$ containing $q=p^k$ elements and let $G=C_n \times D_{12}$, where $n=p^rs$, $r\geq 0$ such that $(p, s)=1$. If $V=1+J(FG)$, then 
	$$U(FG)/V\cong  C_{q-1}^4 \times GL(2, F)^2 \times \Big(\prod_{l>1, ~l|s}\big(C_{q^{d_l}-1}^{4} \times GL(2, F_{d_l})^{2}\big)^{e_l}\Big),$$
	where $V$ is a group of exponent $p^r$ and order $p^{12sk(p^r-1)}$.
\end{theorem}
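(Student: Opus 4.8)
The plan is to follow, step for step, the argument already used for $C_n\times Q_{12}$ in characteristic $p>3$, with $Q_{12}$ replaced by $D_{12}$. First I would set $K=\langle z^{s}\rangle$, a normal subgroup of $G$ of order $p^{r}$, so that $G/K\cong H:=C_{s}\times D_{12}$, and let $\theta\colon FG\to FH$ be the canonical $F$-algebra epimorphism. Since $K$ is a $p$-group, its relative augmentation ideal $\ker\theta=\omega(K)$ is nilpotent and hence contained in $J(FG)$; since $|H|=12s$ is coprime to $p$ (this is where $p>3$ is used), $FH$ is semisimple by Maschke, so $J(FG)\subseteq\ker\theta$. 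Therefore $J(FG)=\ker\theta$ and $FG/J(FG)\cong FH$. Comparing $F$-dimensions gives $\dim_{F}J(FG)=12n-12s=12s(p^{r}-1)$, so $|V|=q^{12s(p^{r}-1)}=p^{12sk(p^{r}-1)}$. For the exponent, note $F\langle z^{s}\rangle\cong F[t]/(t^{p^{r}})$, so $\omega(\langle z^{s}\rangle)$ has nilpotency index exactly $p^{r}$; tensoring up, $J(FG)^{p^{r}}=0$, whence $v^{p^{r}}=1$ for all $v\in V$, while $z^{s}=1+(z^{s}-1)\in V$ has order exactly $p^{r}$. Thus $V$ has exponent $p^{r}$.

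The inclusion $FH\hookrightarrow FG$ then yields a group monomorphism $i\colon U(FH)\to U(FG)$ with $\theta i=\mathrm{id}_{U(FH)}$, so $U(FG)\cong V\rtimes U(FH)$ and hence $U(FG)/V\cong U(FH)$. It remains to compute $U(FH)$. Using $FH\cong(FC_{s})D_{12}$ and Lemma~\ref{40} applied to $FC_{s}$ (valid since $(s,p)=1$),
\[
FH\cong FD_{12}\oplus\Big(\bigoplus_{l>1,\ l\mid s}(F_{d_{l}}D_{12})^{e_{l}}\Big).
\]
The one genuinely $D_{12}$-specific ingredient is the Wedderburn decomposition $FD_{12}\cong F^{4}\oplus M(2,F)^{2}$, valid for \emph{every} finite field $F$ of characteristic $p>3$ with no case distinction on $q\bmod 12$; this follows from $D_{12}\cong C_{2}\times S_{3}$ together with the fact that the $2$-dimensional irreducible representation of $S_{3}$ is already realizable over the prime field (equivalently, $D_{12}^{\mathrm{ab}}\cong C_2\times C_2$ has only real characters), so no quadratic extension $F_{2}$ ever occurs — in contrast with $FQ_{12}$, whose abelianization is $C_4$. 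Hence $(F_{d_{l}}D_{12})^{e_{l}}\cong F_{d_{l}}^{4e_{l}}\oplus M(2,F_{d_{l}})^{2e_{l}}$, and passing to unit groups,
\[
U(FH)\cong C_{q-1}^{4}\times GL(2,F)^{2}\times\Big(\prod_{l>1,\ l\mid s}\big(C_{q^{d_{l}}-1}^{4}\times GL(2,F_{d_{l}})^{2}\big)^{e_{l}}\Big),
\]
which is the asserted description of $U(FG)/V$.

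Most of this is bookkeeping completely parallel to the $Q_{12}$ theorems already proved; the only step demanding care is the uniform Wedderburn decomposition of $FD_{12}$ in characteristic $>3$, i.e.\ verifying that, unlike $FQ_{12}$, it never produces an $F_{2}$-factor — precisely the reason the final answer carries no congruence condition on $q$. Alternatively, one may simply invoke the known description of $U(FD_{12})$ over a finite field of characteristic $p>3$ already available in the literature and then extend scalars to each $F_{d_{l}}$ exactly as above.
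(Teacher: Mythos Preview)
Your proof is correct and follows essentially the same approach as the paper's: set $K=\langle z^{s}\rangle$, identify $J(FG)$ with $\ker\theta$, split $U(FG)\cong V\rtimes U(FH)$, decompose $FH$ via Lemma~\ref{40}, and then invoke $FD_{12}\cong F^{4}\oplus M(2,F)^{2}$ in characteristic $p>3$. The only difference is cosmetic: where the paper cites \cite[Theorem~7.2.7 and Lemma~8.1.17]{Pass} for $J(FG)=\ker\theta$ and \cite[Theorem~4.3]{T1} for the Wedderburn decomposition of $FD_{12}$, you supply self-contained arguments (nilpotence of $\omega(K)$ plus Maschke, and the $D_{12}\cong C_{2}\times S_{3}$ observation), and you spell out the exponent computation that the paper leaves as ``clearly''.
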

\begin{proof} Let $K=\langle z^s \rangle$. Then $G/K \cong H = C_s\times D_{12}$. If $\theta : FG \rightarrow FH$ is the canonical ring epimorphism, then by \cite[Theorem 7.2.7 and Lemma 8.1.17]{Pass},
	$J(FG)=ker(\theta)$, $FG/J(FG) \cong FH$ and $dim_{F}(J(FG))=12s(p^r-1)$. Hence 
	$U(FG) \cong V \rtimes U(FH)$.
	Clearly, exponent of $V=p^r$ and $|V|=p^{12sk(p^r-1)}$.
	Now by Lemma~\ref{40}, 
	\begin{align*}
	FH &\cong (FC_{s})D_{12},\\
	&\cong \big(F\oplus (\oplus_{{l>1, ~l|s}}F_{d_l}^{e_l})\big)D_{12},\\
	&\cong FD_{12} \oplus \big(\oplus_{l>1, ~l|s}(F_{d_l}D_{12})^{e_l}\big).
	\end{align*}
	Now, by \cite[Theorem 4.3]{T1}, $FD_{12} \cong F^4 \oplus M(2, F)^2$. 
	Hence
	$$U(FH) \cong  C_{q-1}^4 \times GL(2, F)^2 \times \Big(\prod_{l>1, ~l|s}\big(C_{q^{d_l}-1}^{4} \times GL(2, F_{d_l})^{2}\big)^{e_l}\Big).$$
\end{proof}

In the above theorem, if $r=0$, then we have the unit group of the semisimple group algebra $FG$ given by 
$$U(FG)\cong C_{q-1}^4 \times GL(2, F)^2 \times \Big(\prod_{{l>1, ~l|n}} \big(C_{q^{d_l}-1}^{4}  \times GL(2, F_{d_l})^{2}\big)^{e_l}\Big).$$

\medskip

\noindent\textbf{Acknowledgements:} The financial assistance provided  to the first author (SFA) in the form of a Senior Research Fellowship from the University Grants Commission, India is gratefully acknowledged.

\end{document}